\numberwithin{figure}{section}
\numberwithin{equation}{section}
\newtheorem{theorem}{Theorem}[section]
\newtheorem{lemma}[theorem]{Lemma}
\newtheorem{proposition}[theorem]{Proposition}
\theoremstyle{definition}
\newcommand{\Rm}[1]{
  \textup{\uppercase\expandafter{\romannumeral#1}}
}
\newcommand{\C}{\mathbb{C}}
\newcommand{\diff}{\,\mathrm{d}}
\newcommand{\I}{\mathbf{I}}
\newcommand{\n}{\mathbf{n}}
\newcommand{\N}{\mathbb{N}}
\renewcommand{\P}{\mathbf{P}}
\newcommand{\pv}{\mathrm{p.v.}\,}
\newcommand{\px}{\partial_x}
\newcommand{\QQ}{\mathbf{Q}}
\newcommand{\R}{\mathbb{R}}
\newcommand{\Res}{\mathrm{Res}}
\newcommand{\Rs}{\mathscr{R}}
\renewcommand{\t}{\mathbf{t}}
\newcommand{\T}{\mathbb{T}}
\renewcommand{\u}{\mathbf{u}}
\newcommand{\ve}{\varepsilon}
\newcommand{\vp}{\varphi}
\newcommand{\x}{\mathbf{x}}
\newcommand{\Z}{\mathbb{Z}}
\DeclareMathOperator{\hilbert}{\mathbf{H}}
\DeclareMathOperator{\sgn}{\mathrm{sgn}}
\DeclareMathOperator{\supp}{\mathrm{supp}}
\DeclareMathOperator{\nonquin}{\mathcal{N}_{\geq 5}}
\DeclareMathOperator{\noncub}{\mathcal{N}_{\geq 3}}
\renewcommand{\iint}{\int}
\author{John K. Hunter}
\address{Department of Mathematics, University of California at Davis}
\email{jkhunter@ucdavis.edu}
\thanks{JKH was supported by the NSF under grant numbers DMS-1616988 and DMS-1908947}
\author{Ryan C. Moreno-Vasquez}
\address{Department of Mathematics, University of California at Davis}
\email{rcmorenovasquez@math.ucdavis.edu}
\author{Jingyang Shu}
\address{Department of Mathematics, Temple University}
\email{jyshu@temple.edu}
\author{Qingtian Zhang}
\address{Department of Mathematics, West Virginia University}
\email{qingtian.zhang@mail.wvu.edu}
\title[Euler fronts and the BH equation]{On the approximation of vorticity fronts by the Burgers-Hilbert equation}
\date{\today}
\begin{document}

\begin{abstract}
This paper proves that the motion of small-slope vorticity fronts in the two-dimensional incompressible Euler equations is approximated
on cubically nonlinear timescales by a Burgers-Hilbert equation derived by Biello and Hunter (2010) using formal asymptotic expansions. The proof uses a modified energy method to show that the contour dynamics equations for vorticity fronts in the Euler equations and the Burgers-Hilbert equation are both approximated by the same cubically nonlinear asymptotic equation. The contour dynamics equations for Euler vorticity fronts are also derived.
\end{abstract}

\maketitle

\section{Introduction}

The two-dimensional incompressible Euler equations have solutions for vorticity fronts located at $y=\vp(x,t)$ that separate
two regions with distinct, constant vorticities $- \alpha_+$ and $- \alpha_-$ in $y > \vp(x,t)$ and $y < \vp(x,t)$, respectively.
As illustrated in Figure~\ref{fig:sol}, these solutions may be regarded as perturbations of a piecewise linear shear flow
$(U(y),0)$ with
\begin{equation}
U(y) = \begin{cases} \alpha_+ y & \text{if $y > 0$,} \\ \alpha_- y &\text{if $y<0$.}\end{cases}
\label{shear_flow}
\end{equation}
In his studies of the stability of shear flows, Rayleigh showed that the flow \eqref{shear_flow}
is linearly stable \cite{Ray80}. He also showed that the vorticity front supports unidirectional waves and computed the Fourier expansion of a spatially periodic traveling wave on the front up to fifth order in the slope of the front \cite{Ray95}.  Rayleigh did not, however, consider the
more complex nonlinear dynamics of small-slope fronts with general spatial profiles that are described by the equations analyzed here.

\begin{figure}[h]
\centering
\includegraphics[width=0.6\textwidth]{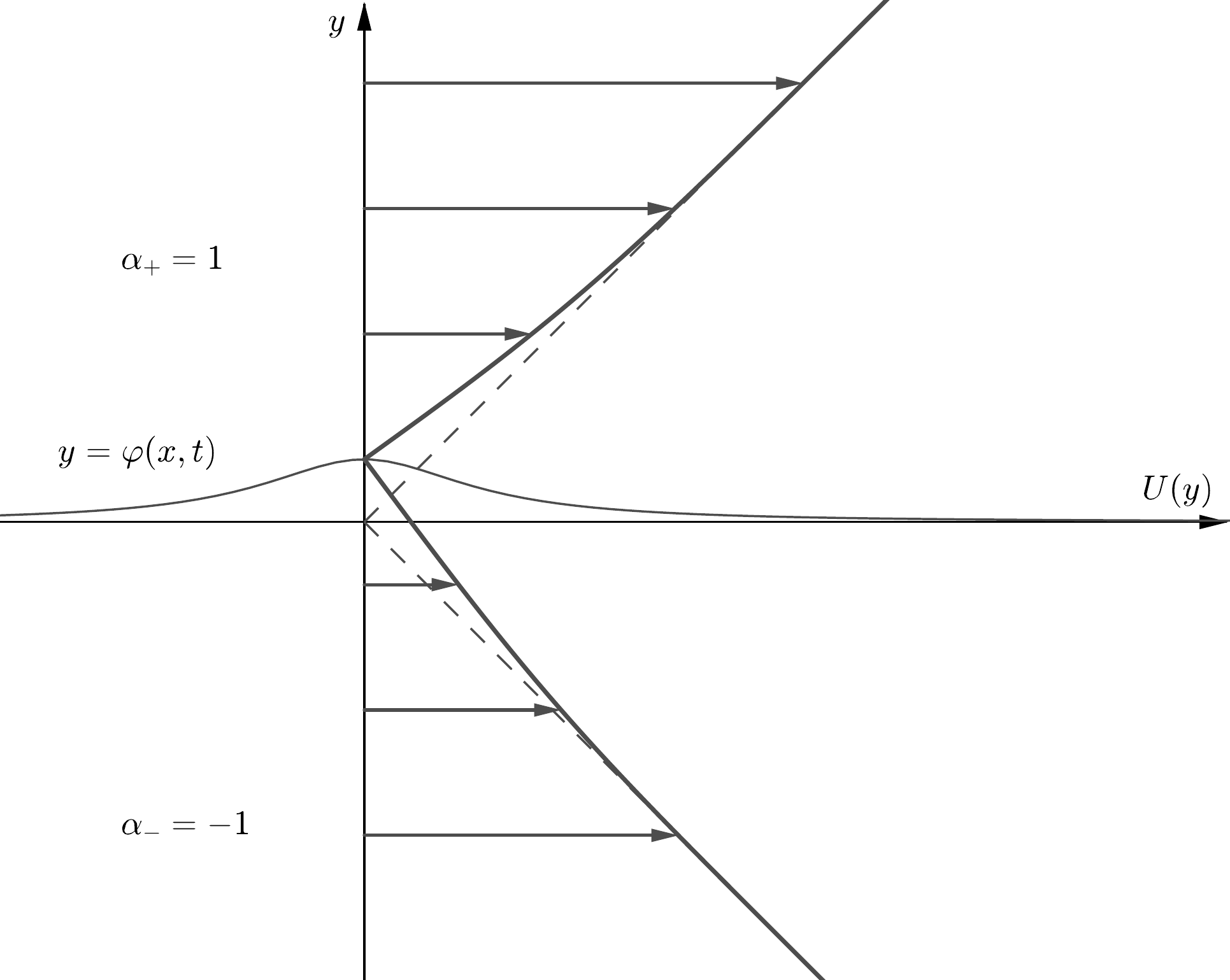}
\caption{An illustration of the $x$-velocity field of a vorticity front in a fluid with symmetric constant vorticities ($\alpha_+ = - \alpha_-$). The dashed line is the unperturbed shear flow \eqref{shear_flow} plotted versus $y$, and the solid line is the perturbed $x$-velocity. The influence of the front motion on the velocity field decays as $|y|\to\infty$.}\label{fig:sol}
\end{figure}

We non-dimensionalize the time variable $t$ so that
\[
\frac{\alpha_+ - \alpha_-}{2} = 1.
\]
Then, in Appendix~\ref{sec:contour}, we show that the displacement $\vp(x,t)$ of the vorticity front  satisfies the following evolution equation
\begin{align}
\label{cde}
\vp_t(x, t) + \frac{m}{2} \px \left[\vp^2(x, t)\right] + \frac{1}{2 \pi} \int_\R \left[\vp_x(x, t) - \vp_x(x + \zeta, t)\right] \log\bigg[1 + \frac{[\vp(x, t) - \vp(x + \zeta, t)]^2}{|\zeta|^2}\bigg] \diff{\zeta} = \hilbert[\vp](x, t),
\end{align}
where $\hilbert$ denotes the Hilbert transform with respect to $x$,  which is a Fourier multiplier operator with symbol $- i \sgn{\xi}$, and
\[
m = \frac{\alpha_+ + \alpha_-}{\alpha_+ - \alpha_-}.
\]

In this paper, we prove that small-slope solutions of \eqref{cde} are approximated on cubically nonlinear time scales by solutions of the following Burgers-Hilbert equation (see Theorem~\ref{thm:euler-bh})
\begin{align}
\label{bh}
u_t(x, t) + \frac{\sqrt{m^2 + 1}}{2} \px \left[u^2(x, t)\right] = \hilbert[u](x, t).
\end{align}
Moreover, we prove that small-slope solutions of both \eqref{cde} and \eqref{bh} are approximated on cubic time scales by solutions of the following asymptotic equation
\begin{equation}
w_t + \frac{m^2 + 1}{2} \px \left\{w^2 |\px| w - w |\px| w^2 + \frac{1}{3} |\px| w^3\right\} = \hilbert[w],
\label{weq}
\end{equation}
where $|\px| = \hilbert \px$ is the Fourier multiplier with symbol $|\xi|$ (see Theorem~\ref{thm:euler-cub}).
A multiple-scale form of this asymptotic solution is given in \eqref{defw} and \eqref{cub}.

Equations \eqref{bh}--\eqref{weq} were derived previously as descriptions of vorticity fronts in \cite{BH10} by means of formal asymptotic expansions of the Burgers-Hilbert and Euler equations; the present paper provides a proof of that result.
The proof uses a modified energy method introduced in \cite{HITW15} to eliminate the effect of the quadratic terms in \eqref{cde}--\eqref{bh}
on energy estimates for the error between solutions of \eqref{cde}--\eqref{bh} and \eqref{weq} on cubic timescales.

The Burgers-Hilbert description is significant because it gives
a clear picture of the nonlinear dynamics of small-slope vorticity fronts. Solutions of the linearized Burgers-Hilbert equation
$u_t= \hilbert[u]$ oscillate with frequency one between an arbitrary spatial profile, its Hilbert transform, and their negatives.
The oscillating spatial profile of the front then undergoes a slow, alternate compression and expansion due to the
Burgers nonlinearity, leading to a complex deformation of the front profile and an effectively cubic nonlinearity.
Numerical solutions show that wave-breaking in small-slope solutions of the Burgers-Hilbert equation corresponds to the formation of multiple, extraordinarily thin filaments in the vorticity front \cite{BHnum}, similar to the ones observed in vortex patches \cite{Dr1, Dr2}.

A Burgers-Hilbert equation was written down by Marsden and Weinstein  \cite{MW83} as a quadratic truncation for the motion of the boundary of a vortex patch, which, from \eqref{cde}, gives the equation
\[
\vp_t(x, t) + \frac{m}{2} \px \left[\vp^2(x, t)\right] = \hilbert[\vp](x, t).
\]
However, this equation does not provide an approximation for front motions on cubic time scales; for example, in the symmetric case $\alpha_+ = -\alpha_-$, we have $m =0$ and the nonlinear term vanishes in the Burgers-Hilbert equation in \cite{MW83}. Rather, one has to use the appropriately renormalized nonlinear coefficient given in \eqref{bh}. From the point of view of normal forms, when one uses a near identity transformation to remove the quadratic term from \eqref{bh} (which is nonresonant), one gets the same cubic term as the one
that arises from the full Euler front equation \eqref{cde}. Dimensional analysis provides some explanation for why the quadratically nonlinear Burgers-Hilbert equation should provide a description of the cubically nonlinear dynamics of vorticity fronts with small slopes \cite{BH10}.
Further results on the Burgers-Hilbert equation can be found in \cite{BN14, BZ17, CCG10, Hun18, Hur18, KPPVp, KPV20, KV19p, SW, Yang}.

\section{Statement of the main theorems}

For $n\in \N$, we denote by $H^n(\R)$ the standard $L^2$-Sobolev space equipped with norm
\[
\|f\|_{H^n(\R)}^2 = \int_\R |f(x)|^2 \diff{x} + \int_\R | \partial^n f(x)|^2 \diff{x},
\]
and we abbreviate $\int_\R = \int$ when there is no confusion. For simplicity, we restrict our analysis to Sobolev spaces of integer orders.

In order to deal with the Euler front equation \eqref{cde} and the Burgers-Hilbert equation \eqref{bh} simultaneously, we consider the equation
\begin{align}
\label{cde'}
\vp_t(x, t) + \frac{\rho}{2} \px \left[\vp^2(x, t)\right] + \frac{\sigma}{2 \pi} \int_\R \left[\vp_x(x, t) - \vp_x(x + \zeta, t)\right] \log\bigg[1 + \frac{[\vp(x, t) - \vp(x + \zeta, t)]^2}{|\zeta|^2}\bigg] \diff{\zeta} = \hilbert[\vp](x, t),
\end{align}
where $\rho, \sigma \in \R$ are parameters. If $\rho = m$, $\sigma = 1$, then \eqref{cde'} reduces to \eqref{cde}, and if $\rho = \sqrt{m^2 + 1}$, $\sigma = 0$, then \eqref{cde'} reduces to \eqref{bh}.
Local well-posedness of the Cauchy problem for this equation with
\[
\vp\in C([0,T]; H^n(\R))\cap C^1([0,T]; H^{n-1}(\R))
\]
and $n \ge 3$ follows by standard arguments for quasilinear equations \cite{chemin, IT20, MB02, Ta}, using energy estimates similar to the ones in \cite{HSZ19pa}.

Equation \eqref{cde'} has the formal multiple-scale asymptotic solution
\begin{equation*}
\vp(x,t) = \ve e^{t\hilbert} v(x,\ve^2 t) + O(\ve^2)\qquad \text{as $\ve\to 0$ with $t= O(\ve^{-2})$},
\end{equation*}
where $v(x,\tau)$ satisfies
\begin{align}
\label{cub2}
v_\tau + \frac{\rho^2 + \sigma}{2} \px \left\{v^2 |\px| v - v |\px| v^2 + \frac{1}{3} |\px| v^3\right\} = 0.
\end{align}
Local well-posedness of the Cauchy problem for this equation with
\[
\vp\in C([0,T]; H^n(\R))\cap C^1([0,T]; H^{n-1}(\R))
\]
and $n \ge 3$ also follows by standard arguments for quasilinear equations,
using analogous energy estimates on $\R$ to the ones given in \cite{HS18, Ifr12} for spatially periodic solutions on $\T$. Equation \eqref{cub2} has a complex form \eqref{psieqn}, which is what we use when constructing approximate solutions since it simplifies the algebra.

As stated in the next theorem, the leading order formal asymptotic solution
\begin{equation}
w(x,t;\ve) = \ve e^{t\hilbert} v(x,\ve^2 t) = \ve\left[v(x,\ve^2 t) \cos{t} + \hilbert[v](x,\ve^2 t) \sin{t}\right]
\label{defw}
\end{equation}
approximates solutions of \eqref{cde'} over cubic timescales.

\begin{theorem}
\label{thm:euler-cub}
Fix an integer $n \geq 3$ and constants $C, T > 0$. Let $n_v \geq n + 5$. Then there exist constants $C', \ve_0 > 0$ such that for all
$0<\ve < \ve_0$, all solutions $v\in C([0,T], H^{n_v}(\R))$ of \eqref{cub2}, and all $\vp_0\in H^n(\R)$ with
\begin{equation}
\sup_{\tau\in[0,T]} \|v(\cdot,\tau)\|_{H^{n_v}} \le C,\qquad \|\vp_0 - v(\cdot,0)\|_{H^n} \le C\ve,
\label{vests}
\end{equation}
there exists a unique solution $\vp \in C([0,T/\ve^2],H^n(\R))$ of the Cauchy problem for the modified Euler front equation \eqref{cde'}
with initial data $\vp(\cdot,0) = \ve\vp_0$, and this solution satisfies
\begin{align}
\label{solest}
\sup_{t \in [0, T / \ve^2]} \left\|\vp(\cdot, t) - \ve \big[v(\cdot, \ve^2 t) \cos{t} + \hilbert[v](\cdot, \ve^2 t) \sin{t}\big]\right\|_{H^n(\R)} \leq C' \ve^{2}.
\end{align}
For the Burgers-Hilbert equation, \eqref{cde'} with $\sigma =0$, the same result holds for $n\ge 2$.
\end{theorem}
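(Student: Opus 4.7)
The plan is to bound the error $r = \vp - w$ between the true solution of \eqref{cde'} and the ansatz $w(x,t;\ve) = \ve e^{t\hilbert} v(x,\ve^2 t)$ by a modified $H^n$ energy in the spirit of the strategy introduced in \cite{HITW15}. The immediate obstruction is that the quadratic nonlinearity $\tfrac{\rho}{2}\px[\vp^2]$ produces a naive energy bound $\tfrac{d}{dt}\|r\|_{H^n}^2 \lesssim \ve\,\|r\|_{H^n}^2 + \cdots$, yielding only $\|r\|_{H^n}\lesssim e^{C\ve t} \sim e^{C/\ve}$ on the cubic timescale $t = T/\ve^2$. The crucial observation is that this quadratic interaction is nonresonant under the linear evolution $e^{t\hilbert}$, since the symbol $-\sgn\xi_1-\sgn\xi_2+\sgn(\xi_1+\xi_2)$ is bounded away from zero on the relevant support, and so the offending $O(\ve)$ growth is really a gauge effect that can be removed by a cubic correction to the energy.

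The first step is to substitute $w$ into \eqref{cde'} and compute the residual. Using $w_t = \ve\hilbert e^{t\hilbert} v + \ve^3 e^{t\hilbert} v_\tau$ together with the expansion $\log(1+x^2) = x^2 + O(x^4)$ in the nonlocal kernel gives $R = \ve^2 R_2 + \ve^3 R_3 + O(\ve^5)$, where $R_2 = \tfrac{\rho}{2}\px[(e^{t\hilbert}v)^2]$ oscillates at frequency $\pm 2$ in $t$ and $R_3$ combines the leading cubic term of the kernel with $e^{t\hilbert} v_\tau$. The resonant (zero-$t$-mean) component of $R_3$ is exactly what is cancelled by demanding \eqref{cub2}, which explains the coefficient $\tfrac{\rho^2 + \sigma}{2}$; the complex form \eqref{psieqn} referenced in the excerpt is used to organise this algebra. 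Under \eqref{cub2} the residual has no resonant contribution at orders $\ve^2$ or $\ve^3$, and the Sobolev norms of $R_2, R_3$ are controlled by $\|v\|_{H^{n_v}}$ with the buffer $n_v \ge n+5$ absorbing the derivative losses from the kernel and its commutators with $\partial^n$. The error then satisfies
\begin{equation*}
r_t - \hilbert r = -\rho\,\px(wr) - \tfrac{\rho}{2}\px(r^2) - \mathcal{N}(w,r) - R,
\end{equation*}
with $\mathcal{N}$ denoting the remaining cubic and higher nonlocal contributions.

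The heart of the proof is the construction of a modified energy of the form
\begin{equation*}
E(t) = \sum_{k=0}^n \int (\partial^k r)^2\,\diff x + \ve\,\mathcal{B}_2(r,w) + \ve^2\,\mathcal{B}_3(r,w),
\end{equation*}
with bounded cubic and quartic forms $\mathcal{B}_2, \mathcal{B}_3$ whose coefficients are Fourier multipliers obtained by solving cohomology equations against the Hilbert symbol. The form $\mathcal{B}_2$ is chosen so that $\dot{\mathcal{B}}_2$ cancels the $O(\ve)$ contribution of $\px(wr)$ to $\tfrac{d}{dt}\|r\|_{H^n}^2$, while $\mathcal{B}_3$ absorbs both the $O(\ve^2)$ source produced by $R_2$ and the secondary $O(\ve^2)$ terms generated when $\dot{\mathcal{B}}_2$ meets $w_t = \hilbert w + O(\ve^2)$; both cohomology equations are solvable because of the nonresonance noted above. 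A direct calculation then yields $\dot E \le C\ve^2 E + C\ve^4\sqrt{E}$. Since \eqref{vests} implies $\|r(0)\|_{H^n} \le C\ve^2$, hence $E(0) \le C\ve^4$, Gronwall on $[0, T/\ve^2]$ produces $\sqrt{E(t)} = O(\ve^2)$, which is \eqref{solest}; the a priori bound is upgraded to existence on the full interval via continuation using the local well-posedness quoted before the theorem. The principal obstacle is the simultaneous construction and control of $\mathcal{B}_2, \mathcal{B}_3$ together with the commutator bookkeeping required by the logarithmic kernel, whose $\zeta$-derivatives must be estimated uniformly to close the modified energy on $H^n$; this tracking is what forces $n_v \ge n+5$. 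For $\sigma = 0$ the nonlocal term is absent, $\mathcal{N}$ is local and purely cubic, no kernel-induced derivative loss occurs, and the regularity threshold relaxes to $n \ge 2$.
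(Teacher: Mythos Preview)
Your overall strategy---a modified energy to neutralize the quadratic interaction on the cubic timescale---is exactly right and matches the paper. The substantive difference is where you put the normal-form corrections. The paper does \emph{not} work with the bare ansatz $w=\ve e^{t\hilbert}v$; it first builds a higher-order approximate solution $\ve V=\ve V_0+\ve^2 V_1+\ve^3 V_2$ (Section~\ref{sec:approx}) so that the residual is already $O(\ve^4)$ in $H^n$. With that done, the scaled error $R=\ve^{-2}(\vp-\ve V)$ satisfies an equation whose only dangerous terms are $\ve(VR)_x$ and $\ve^2 RR_x$, and a single explicit modified energy \eqref{eng}---with corrections $2\ve\rho\int\partial^{n+1}\hilbert[\hilbert V\hilbert R]\,\partial^n R$ and $\ve^2\rho\int\partial^{n+1}\hilbert[(\hilbert R)^2]\,\partial^n R$---cancels them exactly via the Cotlar identity. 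No cohomological system is solved at the energy level; the multiplier algebra is absorbed into $V_1,V_2$.

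Your route, keeping only $w$ and pushing all corrections into $\mathcal B_2,\mathcal B_3$, is in principle equivalent but as written does not close. Two concrete issues. First, $R_2=\tfrac{\rho}{2}\px[(e^{t\hilbert}v)^2]$ does \emph{not} oscillate only at frequencies $\pm 2$: writing $e^{t\hilbert}v=e^{-it}\Psi+e^{it}\Psi^*$ gives a nonoscillatory piece $\rho\,\px|\Psi|^2$, which is precisely what the paper's $\Psi_{10}$ in \eqref{ve2a} removes. Second, and more seriously, your claimed inequality $\dot E\le C\ve^2 E+C\ve^4\sqrt{E}$ requires the effective residual after corrections to be $O(\ve^4)$, but you only describe $\mathcal B_3$ as absorbing $R_2$. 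The nonresonant components of $R_3$ (at $e^{\pm 3it}$ and the $\QQ$-part at $e^{-it}$) are $O(\ve^3)$, and if left alone they contribute $\ve^3\sqrt{E}$ to $\dot E$; Gronwall on $[0,T/\ve^2]$ then yields only $\|r\|_{H^n}=O(\ve)$, not $O(\ve^2)$. In the paper these are removed by $V_2$ (via $\Psi_{23},\Psi_{21}$). You would need a further correction---essentially a $\mathcal B_4$ or an enlarged $\mathcal B_3$ linear in $r$ and cubic in $w$---to match this; note also that corrections linear in $r$ are of the same order as $\|r\|_{H^n}^2$ itself, so their compatibility with the energy equivalence (your analogue of Lemma~\ref{lem:En}) must be checked separately. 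Upgrading the ansatz to $\ve V$ avoids all of this bookkeeping.
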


Here, we require some additional regularity and higher-order estimates for the asymptotic solution $v$ in order to construct sufficiently accurate approximate solutions of \eqref{cde'}.
We remark that in the case of the Burgers-Hilbert equation, the existence of small, smooth $H^n$-solutions on some cubic life span is proved in \cite{HI12,HITW15}. However, the previous theorem
shows that the Burgers-Hilbert solution exists and remains close to the asymptotic solution for any time-interval on which the asymptotic
solution exists.
For the modified Euler front equation \eqref{cde'} with $\sigma \neq 0$, we need to assume that $n \geq 3$ in order to estimate an error term $\sigma (J_5 + J_6)$ that appears in Section~\ref{sec:bhest}.

If either  $\rho = m$, $\sigma = 1$ or $\rho = \sqrt{m^2 + 1}$, $\sigma = 0$, then \eqref{cub2} reduces to
\begin{align}
\label{cub}
v_\tau + \frac{m^2 + 1}{2} \px \left\{v^2 |\px| v - v |\px| v^2 + \frac{1}{3} |\px| v^3\right\} = 0,
\end{align}
so \eqref{cde} and \eqref{bh} have the same asymptotic equation. Moreover,
if $v(x,\tau)$ satisfies \eqref{cub}, then the leading order approximation $w(x,t;\ve)$ in \eqref{defw}
satisfies \eqref{weq} (see Lemma~\ref{lem:vwtrans}), so \eqref{weq} provides an unscaled version of the asymptotic equation for both \eqref{cde} and \eqref{bh}.

The next theorem for approximating solutions of the Euler front equation \eqref{cde} by solutions of the Burgers-Hilbert equation \eqref{bh} then follows immediately by comparing $\vp$ and $u$ with the asymptotic solution $w$ with initial data $w(\cdot,0) = \ve v_0$.

\begin{theorem}
\label{thm:euler-bh}
Fix an integer $n \geq 3$ and a constant $C > 0$. Let $v_0 \in H^{n_v}(\R)$ where $n_v \geq n + 5$, and let $T>0$ be an existence time for the solution $v \in C([0,T]; H^{n_v})$ of the asymptotic equation \eqref{cub} with initial data $v(\cdot,0) = v_0$. Then there exist constants $C', \ve_0 > 0$, depending on $\|v_0\|_{H^{n_v}}$ and $T$, such that for all $0 < \ve < \ve_0$ and all $\vp_0, u_0 \in H^n(\R)$ with
\[
\|\vp_0 - v_0\|_{H^n(\R)} + \|u_0 - v_0\|_{H^n(\R)} \leq C \ve,
\]
there exist unique solutions $\vp \in C([0, T / \ve^2]; H^n(\R))$ of \eqref{cde} and $u \in C([0, T / \ve^2]; H^n(\R))$ of \eqref{bh} with initial data $\vp(\cdot, 0) = \ve \vp_0$ and $u(\cdot, 0) = \ve u_0$, respectively,
which satisfy
\[
\sup_{t \in [0, T / \ve^2]} \|\vp(\cdot, t) - u(\cdot, t)\|_{H^n} \leq C \ve^2.
\]
\end{theorem}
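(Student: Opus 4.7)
The plan is to use the asymptotic profile $w$ from \eqref{defw} as a common intermediary between $\vp$ and $u$, and close the argument by the triangle inequality, with Theorem~\ref{thm:euler-cub} supplying the quantitative bound for each comparison. The key algebraic observation is that for both parameter choices relevant here, namely $(\rho,\sigma) = (m,1)$ for the Euler front equation \eqref{cde} and $(\rho,\sigma) = (\sqrt{m^2+1},0)$ for the Burgers-Hilbert equation \eqref{bh}, one has $\rho^2 + \sigma = m^2 + 1$. Consequently, the asymptotic equation \eqref{cub2} collapses in both cases to the single equation \eqref{cub}, and one single solution $v$ of \eqref{cub} can be used to construct one profile $w(x,t;\ve) = \ve e^{t\hilbert} v(x,\ve^2 t)$ that is simultaneously $O(\ve^2)$-close to both $\vp$ and $u$ on cubic timescales.

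Concretely, I would first take $v \in C([0,T]; H^{n_v})$ to be the given solution of \eqref{cub} with initial data $v_0$, set $M := \sup_{\tau \in [0,T]} \|v(\cdot,\tau)\|_{H^{n_v}}$, and fix a single constant $\bar C \ge \max(M, C)$ to serve as the hypothesis constant in two invocations of Theorem~\ref{thm:euler-cub}. These invocations produce a common threshold $\bar\ve_0 > 0$ (the smaller of the two) and a common error constant $\bar C'$ (the larger of the two). The bound $\|\vp_0 - v_0\|_{H^n} \le C\ve$, with $v_0 = v(\cdot,0)$, verifies the initial-data hypothesis \eqref{vests} for the first invocation with $(\rho,\sigma)=(m,1)$, which produces a unique solution $\vp \in C([0,T/\ve^2]; H^n)$ of \eqref{cde} with $\vp(\cdot,0) = \ve\vp_0$ and
\[
\sup_{t \in [0,T/\ve^2]} \|\vp(\cdot,t) - w(\cdot,t;\ve)\|_{H^n} \le \bar C' \ve^2.
\]
The second invocation, with $(\rho,\sigma)=(\sqrt{m^2+1},0)$ and the \emph{same} $v$ and $w$, uses $\|u_0 - v_0\|_{H^n} \le C\ve$ to produce a unique solution $u \in C([0,T/\ve^2];H^n)$ of \eqref{bh} with $u(\cdot,0) = \ve u_0$ and the analogous bound for $\|u(\cdot,t) - w(\cdot,t;\ve)\|_{H^n}$.

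The triangle inequality then yields $\|\vp(\cdot,t) - u(\cdot,t)\|_{H^n} \le 2 \bar C' \ve^2$, which is the desired estimate with $C' := 2 \bar C'$. There is no genuine obstacle in this argument, since Theorem~\ref{thm:euler-cub} does all of the analytic work; the only subtlety is the compatibility of the two parameter choices, which is precisely the identity $\rho^2 + \sigma = m^2 + 1$, together with the minor bookkeeping of choosing the larger of the two $\bar C'$ and the smaller of the two $\bar\ve_0$. The regularity threshold $n \ge 3$ stated in the hypothesis is exactly what is needed for the Euler invocation, and is more than enough for the Burgers-Hilbert invocation, which would permit $n \ge 2$.
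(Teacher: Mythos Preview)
Your proposal is correct and matches the paper's own argument essentially verbatim: the paper states that Theorem~\ref{thm:euler-bh} ``follows immediately by comparing $\vp$ and $u$ with the asymptotic solution $w$,'' having just observed that both parameter choices yield $\rho^2+\sigma=m^2+1$ and hence the common asymptotic equation \eqref{cub}. Your additional bookkeeping with $\bar C$, $\bar C'$, and $\bar\ve_0$ simply makes explicit what the paper leaves implicit.
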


The rest of the paper is devoted to the proof of Theorem~\ref{thm:euler-cub}. The main idea of the proof is to use a modified energy inspired by a normal form transformation \cite{HITW15} to obtain cubic energy estimates that do not lose derivatives.
Schneider and Uecker \cite{SU17} give an introduction to this method, and related proofs for NLS approximations can be found in \cite{CW17, Dul17, DH18, IT19, PL19}. Unlike these papers, where the waves under study are dispersive, the Euler front equation \eqref{cde} is non-dispersive with no quadratic three-wave resonances and many cubic four-wave resonances. In particular, the spatial spectrum of $\vp$ is not localized near a specific wavenumber.  This property explains why the asymptotic equation for the Euler front equation is \eqref{cub}, rather than an NLS equation. Moreover, in the absence of dispersive decay,
one does not expect to get the existence of global solutions for general small, smooth initial data as in \cite{CGSI19, HSZ20, HSZ18p}.

In Section~\ref{sec:bhres}, we derive an approximate solution of \eqref{cde'} and obtain residual estimates. In Section~\ref{sec:bherr}, we define a modified energy for the error equation. In Section~\ref{sec:bhest}, we obtain energy estimates for the error and use them to prove Theorem~\ref{thm:euler-cub}. In the appendices we derive the contour dynamics equation for Euler fronts and prove some algebraic details used in the derivation of the asymptotic equation.

Throughout the paper, we use $C$ to denote a constant independent of $\ve$, which may change from line to line,
and the notation $O(\delta)$ denotes a term satisfying $|O(\delta)| \le C \delta$.

\section{Formal approximation and residual estimates}
\label{sec:bhres}

In this section, we construct an approximate solution of \eqref{cde'} and estimate its residual.
We first give an expansion of the nonlinear term in the equation.

\subsection{Expansion of the nonlinearity}
We write \eqref{cde'} as
\begin{align}
\label{cde2}
&\vp_t + \frac{\rho}{2} \px (\vp^2) + \sigma \noncub[\vp] = \hilbert[\vp],
\end{align}
where $\noncub$ is the cubic term
\begin{equation}
\noncub[\vp](x, t) = \frac{1}{2 \pi} \int_\R \left[\vp_x(x, t) - \vp_x(x + \zeta, t)\right] \log\left[1 + \frac{[\vp(x, t) - \vp(x + \zeta, t)]^2}{|\zeta|^2}\right] \diff{\zeta}.
\label{noncub}
\end{equation}
We then have the following expansion.

\begin{lemma}
\label{lem:noncub}
Let $\noncub[\vp]$ be given by \eqref{noncub} where $\|\vp\|_{H^{n + 1}} < 1$ for some integer $n\ge 2$. Then
\begin{align}
\label{N5-1}
\noncub[\vp] =  \frac{1}{2} \px \left\{\vp^2 |\px| \vp - \vp |\px| \vp^2 + \frac{1}{3} |\px| \vp^3\right\} + \nonquin[\vp],
\end{align}
where the quintic and higher-degree terms $\nonquin[\vp]$ satisfy the estimate
\begin{equation}
\label{nonquinest}
\|\nonquin[\vp]\|_{H^{n}} \leq C(n) \sum_{k = 2}^\infty k^{n-1} \|\vp\|_{H^{n+1}}^{2 k + 1}.
\end{equation}
\end{lemma}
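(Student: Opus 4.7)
The plan is to Taylor-expand the logarithm in the integrand and identify its leading term explicitly, then bound the tail via tame Moser-type inequalities. The hypothesis $\|\vp\|_{H^{n+1}}<1$ ensures, via Sobolev embedding, that $\|\vp_x\|_\infty\leq C\|\vp\|_{H^{n+1}}$ is small enough that $s=[\vp(x)-\vp(x+\zeta)]^2/\zeta^2\leq\|\vp_x\|_\infty^2<1$ uniformly in $x,\zeta$ (shrinking the threshold if necessary, which may be absorbed into the $n$-dependent constant), so the series $\log(1+s)=\sum_{k\geq 1}(-1)^{k+1}s^k/k$ converges under the integral. Using the algebraic identity $[\vp_x(x)-\vp_x(x+\zeta)][\vp(x)-\vp(x+\zeta)]^{2k}=\frac{1}{2k+1}\partial_x\{[\vp(x)-\vp(x+\zeta)]^{2k+1}\}$, this recasts the nonlinearity as
\[
\noncub[\vp]=\frac{1}{2\pi}\sum_{k=1}^\infty\frac{(-1)^{k+1}}{k(2k+1)}\partial_x\int_\R\frac{[\vp(x)-\vp(x+\zeta)]^{2k+1}}{\zeta^{2k}}\diff\zeta.
\]

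For the $k=1$ term, I would expand the cube via $(a-b)^3=3a^2(a-b)-3a(a^2-b^2)+(a^3-b^3)$ with $a=\vp(x)$, $b=\vp(x+\zeta)$, and apply the principal-value identity $\int_\R[f(x)-f(x+\zeta)]\zeta^{-2}\diff\zeta=\pi|\px|f(x)$ (which follows from $\int_\R(1-\cos\xi\zeta)\zeta^{-2}\diff\zeta=\pi|\xi|$). This produces exactly $\frac{1}{2}\partial_x\{\vp^2|\px|\vp-\vp|\px|\vp^2+\tfrac{1}{3}|\px|\vp^3\}$, matching the cubic term in \eqref{N5-1}.

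For the tail $\nonquin[\vp]$ corresponding to $k\geq 2$, I would estimate each summand in $H^n$. Since each summand is expressed as a derivative, it suffices to bound the $\zeta$-integrand in $H^{n+1}$. Passing $\|\cdot\|_{H^{n+1}}$ under the $\diff\zeta$ integral by Minkowski's inequality, I would apply the Moser-type bound
\[
\|\Psi_\zeta^{2k+1}\|_{H^{n+1}}\leq C(n)(2k+1)^{n+1}\|\Psi_\zeta\|_{L^\infty}^{2k}\|\Psi_\zeta\|_{H^{n+1}},\qquad\Psi_\zeta(x):=\vp(x)-\vp(x+\zeta),
\]
obtained from Leibniz plus Gagliardo-Nirenberg interpolation. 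The norm bounds $\|\Psi_\zeta\|_\infty\leq C\|\vp\|_{H^{n+1}}\min(|\zeta|,1)$ and $\|\Psi_\zeta\|_{H^{n+1}}\leq 2\|\vp\|_{H^{n+1}}$ combine with the $|\zeta|^{-2k}$ weight to produce a factor $\min(1,|\zeta|^{-2k})$ whose $\diff\zeta$-integral is bounded uniformly in $k\geq 1$. Together with the outer prefactor $[k(2k+1)]^{-1}$, the net growth is $(2k+1)^{n+1}/[k(2k+1)]=O(k^{n-1})$, which yields the stated bound termwise.

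The main obstacle is keeping the $k$-dependence of the Moser constant polynomial. A direct Leibniz expansion of $\partial^{n+1}\Psi_\zeta^{2k+1}$ produces $\binom{n+2k+1}{n+1}=O(k^{n+1})$ multi-index terms, and the key point is to use Gagliardo-Nirenberg interpolation to bound every term by the same uniform expression $\|\Psi_\zeta\|_\infty^{2k}\|\Psi_\zeta\|_{H^{n+1}}$, so that only the combinatorial count survives. After dividing by $k(2k+1)=O(k^2)$, this produces exactly the claimed $k^{n-1}$ rate, and the series converges under the smallness hypothesis on $\|\vp\|_{H^{n+1}}$.
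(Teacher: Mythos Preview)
Your argument is correct and follows essentially the same route as the paper: Taylor-expand the logarithm, rewrite each term as $\partial_x\int\Psi_\zeta^{2k+1}/\zeta^{2k}\,d\zeta$, identify the $k=1$ contribution explicitly (the paper cites \cite{HS18} for this, while you give the $(a-b)^3$ computation directly), and bound the tail term-by-term via Leibniz. The only technical difference is in how you control the Leibniz sum: the paper simply places the highest-derivative factor in $L^2$ and the remaining factors in $L^\infty$ (using that at most one $j_\ell$ can exceed $n-1$), then splits the $\zeta$-integral at $|\zeta|=1$; you instead invoke a tame Moser/Gagliardo--Nirenberg estimate together with the pointwise bound $\|\Psi_\zeta\|_\infty\lesssim\min(|\zeta|,1)\|\vp\|_{H^{n+1}}$. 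Both routes produce the same $O(k^{n-1})$ growth after dividing by $k(2k+1)$, and both incur the same harmless Sobolev-constant factor $C^{2k}$ that you correctly flag.
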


\begin{proof}
The assumption $\|\vp\|_{H^{n + 1}} < 1$ guarantees that $\|\vp_x\|_{L^\infty} < 1$, so the mean value theorem implies that
\[
\frac{[\vp(x, t) - \vp(x + \zeta, t)]^2}{|\zeta|^2} < 1 \qquad \forall x, \zeta \in \R, \zeta \neq 0.
\]
We can therefore Taylor expand the right-hand side of \eqref{noncub} to obtain that
\begin{align*}
\noncub[\vp](x, t) = \sum_{k = 1}^\infty \frac{(-1)^{k + 1}}{2 \pi k (2k + 1)} \px \int_\R \frac{[\vp(x, t) - \vp(x + \zeta, t)]^{2 k + 1}}{\zeta^{2 k}} \diff{\zeta}.
\end{align*}

Taking the $n$-th derivative of this equation and using the Leibnitz rule, we see that a general term of degree $2k+1$ in $\px^n \noncub[\vp](x, t)$ has the form
\[
C(n, k) \int_\R \prod_{\ell = 1}^{2 k + 1} \left[\vp^{(j_\ell)}(x, t) - \vp^{(j_\ell)}(x + \zeta, t)\right] \frac{\diff{\zeta}}{|\zeta|^{2 k}},
\]
where $C(n, k) > 0$ is a constant depending on $n$ and $k$ which can be bounded by $C(n) k^{n-1}$, and
\begin{equation}
\label{jcon}
j_1 + j_2 + \dotsb + j_{2k + 1} = n + 1, \qquad 0 \leq j_1 \leq j_2 \leq \dotsb \leq j_{2 k + 1} \leq n + 1.
\end{equation}

Using the Minkowski and H\"{o}lder inequalities, we obtain that
\begin{align*}
& \bigg\|C(n, k) \int_\R \prod_{\ell = 1}^{2 k + 1} \left[\vp^{(j_\ell)}(x, t) - \vp^{(j_\ell)}(x + \zeta, t)\right] \frac{\diff{\zeta}}{|\zeta|^{2 k}}\bigg\|_{L^2_x}\\
\leq ~& C(n, k) \bigg\|\int_{|\zeta| < 1} \left[\vp^{(j_{2 k + 1})}(x, t) - \vp^{(j_{2 k + 1})}(x + \zeta, t)\right]\prod_{\ell = 1}^{2 k} \frac{\vp^{(j_\ell)}(x, t) - \vp^{(j_\ell)}(x + \zeta, t)}{|\zeta|} \diff{\zeta}\bigg\|_{L^2_x}\\
& \qquad + C(n, k) \bigg\|\int_{|\zeta| \ge 1} \prod_{\ell = 1}^{2 k + 1} \left[\vp^{(j_\ell)}(x, t) - \vp^{(j_\ell)}(x + \zeta, t)\right] \frac{\diff{\zeta}}{|\zeta|^{2 k}}\bigg\|_{L^2_x}\\
\leq ~& C(n, k) \|\px^{j_{2 k + 1}} \vp\|_{L^2} \prod_{\ell = 1}^{2 k} \|\px^{j_\ell + 1} \vp\|_{L^\infty} +  C(n, k) \|\px^{j_{2 k + 1}} \vp\|_{L^2} \prod_{\ell = 1}^{2 k} \|\px^{j_\ell} \vp\|_{L^\infty}\\
\leq ~& C(n,k)  \|\vp\|_{H^{n + 1}}^{2 k + 1},
\end{align*}
where the last line holds by Sobolev embedding, since
\eqref{jcon} implies that $0\le j_{\ell}\le n-1$ for $1\le \ell \le 2k$.
Summing over all these terms with $k \ge 2$, and using the bound for $C(n,k)$, we find that
\begin{align*}
\nonquin[\vp](x, t) = \sum_{k = 2}^\infty \frac{(-1)^{k + 1}}{2 \pi k (2k + 1)} \px \int_\R \frac{[\vp(x, t) - \vp(x + \zeta, t)]^{2 k + 1}}{\zeta^{2 k}} \diff{\zeta}.
\end{align*}
satisfies \eqref{nonquinest}.

Finally, as shown in \cite{HS18}, the leading-order cubic term in $\noncub[\vp]$ with $k=1$ can be written as
\begin{equation}
\frac{1}{6 \pi} \px \int_\R \frac{[\vp(x, t) - \vp(x + \zeta, t)]^{3}}{\zeta^{2 }} \diff{\zeta}
=  \frac{1}{2} \px \left\{\vp^2 |\px| \vp - \vp |\px| \vp^2 + \frac{1}{3} |\px| \vp^3\right\},
\label{cubic_term}
\end{equation}
so \eqref{N5-1} follows.
\end{proof}

We remark that there is a cancellation of derivatives on the right-hand side of \eqref{cubic_term}, and the $H^n$-norm of the cubic term
can be estimated in terms of $\|\vp\|_{H^{n+1}}$ as above, but we will not use this fact in the present paper.

\subsection{Approximate solution}

We denote the residual of a function $f(x,t)$ by
\begin{align}
\label{res}
\Res(f) = - f_t - \frac{\rho}{2} \left(f^2\right)_x - \sigma \noncub[f] + \hilbert[f],
\end{align}
which measures the extent to which $f$ fails to satisfy \eqref{cde'}.

\label{sec:approx}
We look for an approximate solution $\vp \approx \ve V$ of  \eqref{cde'} of the form
\begin{align}
\label{V}
\begin{split}
\ve V(x, t; \ve) = \ve V_0(x, t, \ve^2 t) + \ve^2 V_1(x, t, \ve^2 t) + \ve^3 V_2(x, t, \ve^2 t),
\end{split}
\end{align}
where $0 < \ve \ll 1$ is a small parameter and the functions $V_n(x,t,\tau)$ are to be determined.

Using \eqref{V} and \eqref{N5-1} in \eqref{res}, we find that the residual of $\ve V$ is given by
\begin{align*}
\Res(\ve V) & = - \ve \left(V_{0 t} - \hilbert[V_0]\right) - \ve^2 \left(V_{1 t} - \hilbert[V_1] + \frac{\rho}{2} (V_0^2)_x\right)\\
& \quad - \ve^3 \left(V_{0 \tau} + V_{2 t} - \hilbert[V_2] + \rho (V_0 V_1)_x + \frac{\sigma}{2} \px \Big\{V_0^2 |\px| V_0 - V_0 |\px| V_0^2 + \frac{1}{3} |\px| V_0^3\Big\}\right) + O(\ve^4).
\end{align*}
In order to make $\Res(\ve V) = O(\ve^4)$, we require that $V_0$, $V_1$, $V_2$ satisfy
\begin{align}
& V_{0 t} = \hilbert[V_0], \label{ve1'}\\
&V_{1 t} + \frac{\rho}{2} (V_0^2)_x = \hilbert[V_1], \label{ve2'}\\
& V_{2 t} + V_{0 \tau} + \rho (V_0 V_1)_x + \frac{\sigma}{2} \px \Big\{V_0^2 |\px| V_0 - V_0 |\px| V_0^2 + \frac{1}{3} |\px| V_0^3\Big\} = \hilbert[V_2]. \label{ve3'}
\end{align}

It is convenient to use a complex representation for the solutions of these equations. Let $\P$ be the projection onto positive spatial wavenumbers.  If $\I$ denotes the identity operator, then $\P$ and its complement $\QQ = \I - \P$ (the projection onto negative spatial wavenumbers) are given by
\begin{align*}
\P = \frac{\I + i \hilbert}{2}, \qquad \QQ = \frac{\I - i \hilbert}{2}.
\end{align*}

\textbf{Solution for $V_0$.} The solution of \eqref{ve1'} can be written as
\begin{align}
\label{V0}
V_0(x, t, \tau) = \Psi(x, \tau) e^{- i t} + \Psi^*(x, \tau) e^{i t},
\end{align}
where the complex-valued function $\Psi$ satisfies $\P[\Psi] = \Psi$.
In particular, it follows that $\P[\Psi^2] = \Psi^2$ and $\hilbert[\Psi^2] = - i \Psi^2$.

\textbf{Solution for $V_1$.}  A solution of \eqref{ve2'} can be written as
\begin{align}
\label{V1}
V_1(x, t, \tau) = \Psi_{12}(x, \tau) e^{- 2 i t} + \Psi_{10}(x, \tau) + \Psi_{12}^*(x, \tau) e^{2 i t},
\end{align}
where
\begin{align}
\label{ve2a}
\Psi_{12} = - \frac{i \rho}{2} \left(\Psi^2\right)_x \qquad \text{and} \qquad \Psi_{10} = - \rho \hilbert\left[|\Psi|^2\right]_x.
\end{align}
We omit a solution of the homogeneous equation from $V_1$ since we do not need it.

\textbf{Solution for $V_2$.} To proceed further, we use the following proposition, which is proved by a straightforward computation  \cite{BH10}.

\begin{proposition}
\label{prop:sol}
Consider the equation
\begin{align*}
f_t = \hilbert[f] + B(x) e^{- i n t},
\end{align*}
where $n \in \Z$ and $B \in L^2(\R;\C)$. Then:
\begin{enumerate}
\item If $n^2 \neq 1$, then the equation is uniquely solvable for every $B$;
\item If $n = 1$, then the equation is solvable if and only if $\P[B] = 0$;
\item If $n = -1$, then the equation is solvable if and only if $\QQ[B] = 0$.
\end{enumerate}
\end{proposition}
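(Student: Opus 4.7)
The plan is to pass to the Fourier side, where the equation becomes a pointwise algebraic relation whose solvability is transparent. First I would seek a solution in the same single-frequency form as the forcing: write $f(x,t) = A(x)e^{-int}$ with $A\in L^2(\R;\C)$ to be determined. Substituting into the equation and canceling the factor $e^{-int}$ reduces the problem to the stationary equation $\hilbert[A] + inA = -B$. Taking the Fourier transform and using that $\hilbert$ is the Fourier multiplier with symbol $-i\sgn\xi$ then yields the pointwise relation
\begin{equation*}
i\bigl(n-\sgn\xi\bigr)\hat A(\xi) = -\hat B(\xi), \qquad \xi\in\R\setminus\{0\}.
\end{equation*}

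Next I would split on the sign of $\xi$. Since $\sgn\xi\in\{-1,+1\}$, the coefficient $n-\sgn\xi$ vanishes precisely when either $n=1$ and $\xi>0$, or $n=-1$ and $\xi<0$. If $n^2\neq 1$, the coefficient is bounded away from zero uniformly in $\xi$ by $\min(|n-1|,|n+1|)>0$, so dividing gives the unique $L^2$ solution $\hat A(\xi) = i\hat B(\xi)/(n-\sgn\xi)$; inverting the Fourier transform produces the (unique, within the ansatz) function $A$, and hence $f$. If $n=1$, consistency on $\{\xi>0\}$ forces $\hat B\equiv 0$ there, which is exactly $\P[B]=0$; conversely, given this condition, $\hat A(\xi)$ is determined on $\{\xi<0\}$ and can be taken to vanish on $\{\xi>0\}$, yielding a solution. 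The case $n=-1$ is entirely symmetric and produces the condition $\QQ[B]=0$.

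Finally, to match the solvability conditions to those stated in the proposition, I would invoke the identities $\P = (\I+i\hilbert)/2$ and $\QQ = (\I-i\hilbert)/2$ already recorded in the paper, which confirm that $\P$ and $\QQ$ are the projectors onto positive and negative spatial wavenumbers, respectively. No genuine obstacle arises here; the computation is ``straightforward'' as the authors note, and the only care needed is bookkeeping with the sign of $\xi$ and the two projectors. In particular, in cases (2) and (3) uniqueness within the ansatz holds only up to a homogeneous contribution supported on the resonant half-line, which is why the proposition asserts only solvability (and not uniqueness) in those two cases.
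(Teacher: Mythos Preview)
Your argument is correct and is precisely the ``straightforward computation'' the paper alludes to (the paper itself does not give a proof, citing \cite{BH10} instead): reduce to the single-frequency ansatz, pass to the Fourier side, and read off solvability from whether the multiplier $i(n-\sgn\xi)$ vanishes on a half-line. Your remark that uniqueness in case~(1) is uniqueness \emph{within the ansatz} is the right caveat, since the homogeneous equation $f_t=\hilbert[f]$ has nontrivial $L^2$ solutions $e^{t\hilbert}f_0$; these cannot have the form $A(x)e^{-int}$ when $n^2\neq 1$, which is why the ansatz yields a unique $A$.
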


Equation \eqref{ve3'} has solutions of the form
\begin{align}
\label{V2}
V_2(x, t, \tau) = \Psi_{23}(x, \tau) e^{-3 i t} + \Psi_{21}(x, \tau) e^{- i t} + \Psi_{21}^*(x, \tau) e^{i t} + \Psi_{23}^*(x, \tau) e^{3 i t}.
\end{align}
Using \eqref{V0}--\eqref{V2} in \eqref{ve3'}, and equating terms proportional to $e^{-3it}$ and $e^{-it}$, we obtain the following equations for $\Psi_{23}$ and $\Psi_{21}$
\begin{align}
& - 3 i \Psi_{23} + \rho (\Psi \Psi_{12})_x + \frac{\sigma}{2} \px\bigg\{\Psi^2 |\px| \Psi - \Psi |\px| \Psi^2 + \frac{1}{3} |\px| \Psi^3\bigg\} = \hilbert[\Psi_{23}], \label{ve3'23}\\
\begin{split}
& (\Psi_{21} e^{- i t})_t + \Psi_\tau e^{- i t} + \rho \left(\Psi \Psi_{10} + \Psi^* \Psi_{12}\right)_x e^{- i t}\\
& \qquad + \frac{\sigma}{2} \px \bigg\{2 |\Psi|^2 |\px| \Psi + \Psi^2 |\px| \Psi^* - 2 \Psi |\px| |\Psi|^2 - \Psi^* |\px| \Psi^2 + |\px| (\Psi |\Psi|^2)\bigg\} e^{- i t} = \hilbert[\Psi_{21} e^{- i t}].
\end{split} \label{ve3'21}
\end{align}
The solution of \eqref{ve3'23} for $\Psi_{23}$ is given by
\begin{align*}
\Psi_{23} & = - \frac{3 i}{16} \px \left\{2 \rho \Psi \Psi_{12} + \sigma \Psi^2 |\px| \Psi - \sigma \Psi |\px| \Psi^2 + \frac{\sigma}{3} |\px| \Psi^3\right\}\\
& \qquad + \frac{1}{16} |\px| \left\{2 \rho \Psi \Psi_{12} + \sigma \Psi^2 |\px| \Psi - \sigma \Psi |\px| \Psi^2 + \frac{\sigma}{3} |\px| \Psi^3\right\}.
\end{align*}

Applying Proposition~\ref{prop:sol} to \eqref{ve3'21}
and simplifying the result (see Appendix~\ref{sec:simpsolv}), we find that the solvability condition for $\Psi_{21} e^{- i t}$ is satisfied if
\begin{align}
\label{psieqn}
\Psi_\tau = (\rho^2 + \sigma) \P\left[i |\Psi|^2 \Psi_x + \Psi \hilbert[|\Psi|^2]_x\right]_x.
\end{align}
Equation \eqref{psieqn} is the complex form of \eqref{cub2}. Indeed, substituting
\begin{align*}
\Psi = \P[v] = \frac{1}{2} \left[v + i \hilbert[v]\right]
\end{align*}
into this equation we find, after some algebra, that $v = \Psi + \Psi^*$ satisfies \eqref{cub2}.

When \eqref{psieqn} holds, a solution of \eqref{ve3'21} for $\Psi_{21}$ is given by
\begin{align*}
\Psi_{21}
& = \frac{1}{2} \QQ\bigg[(- \rho^2 + \sigma) |\Psi|^2 \Psi_x + i (\sigma + \rho^2) \Psi \hilbert[|\Psi|^2]_x + \sigma \Psi^2 \Psi^*_x\bigg]_x.
\end{align*}

In conclusion, given a solution $\Psi$ of \eqref{psieqn}, or equivalently $v$ of \eqref{cub2}, we have constructed a function $\ve V$ of the form \eqref{V} that satisfies \eqref{cde'} up to a residual of the order $\ve^4$.

\subsection{Residual estimates}
In this subsection, we obtain estimates for the residual of the approximate solution constructed above.
We observe that at each stage in the expansion of $V$ we increase the degree in $\Psi$ by one and introduce one additional $x$-derivative, so $V_k$ is of degree $k+1$ in $\Psi$ and involves $k$ derivatives with respect to $x$. Thus, in order to construct the approximate solution $\ve V \in C([0,T], H^n)$, which involves two derivatives of $\Psi$, we require $n_v \ge n+2$
derivatives in the solution $v$ of \eqref{cub2}.
In the next lemma, we use two more derivatives of $v$ to estimate the residual of $\ve V$, but we do not attempt to make the estimate sharp.

\begin{lemma}
\label{lem:bhres}
Let $n \geq 0$ be an integer and suppose that $v \in C([0, T], H^{n_v}(\R))$ with $n_v \ge n + 4$
is a solution of \eqref{cub2}. Then there exists a constant $C > 0$, depending on $\|v\|_{C([0, T], H^{n_v}(\R))}$, such that for all sufficiently small $\ve >0$ there is a function $\ve V \in C([0, T], H^{n_v - 2}(\R))$ of the form \eqref{V} whose residual \eqref{res} satisfies the estimate
\begin{equation}
\sup_{t \in [0, T / \ve^2]} \|\Res(\ve V)(\cdot, t; \ve)\|_{H^n} \leq C \ve^4.
\label{resest}
\end{equation}
Furthermore,
\begin{align}
\label{diff'}
\sup_{t \in [ 0, T / \ve^2]} \left\|\ve \left[v(\cdot, \ve^2 t) \cos{t} + \hilbert[v](\cdot, \ve^2 t) \sin{t}\right] - \ve V(\cdot, t; \ve)\right\|_{H^n} \leq C \ve^2.
\end{align}
\end{lemma}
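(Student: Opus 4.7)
My plan is to substitute the ansatz \eqref{V} into the residual \eqref{res} and expand in powers of $\ve$. Because the $V_j$ depend on both the fast time $t$ and the slow time $\tau=\ve^2 t$, the chain rule gives
\[
\partial_t(\ve V) = \ve V_{0t} + \ve^2 V_{1t} + \ve^3(V_{0\tau}+V_{2t}) + \ve^4 V_{1\tau} + \ve^5 V_{2\tau}.
\]
After expanding $\frac{\rho}{2}\px(\ve V)^2$ and using \eqref{N5-1} to split $\sigma\noncub[\ve V]$ into its leading cubic part plus the quintic remainder $\sigma\nonquin[\ve V]$, the equations \eqref{ve1'}--\eqref{ve3'} defining $V_0,V_1,V_2$ are precisely those that cancel the $O(\ve),\,O(\ve^2),\,O(\ve^3)$ contributions. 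What remains in $\Res(\ve V)$ is a finite list of explicit polynomial-in-$(V_0,V_1,V_2)$ expressions carrying prefactors $\ve^4,\ve^5,\ve^6$: the $\tau$-derivative terms $\ve^4 V_{1\tau}$ and $\ve^5 V_{2\tau}$, the cross products $V_1^2+2V_0V_2,\,V_1V_2,\,V_2^2$ under $\px$, the sub-leading cubic contributions obtained when the leading cubic in $\noncub$ is evaluated on $\ve V_0+\ve^2 V_1+\ve^3 V_2$ rather than on $\ve V_0$ alone, and the quintic remainder $\sigma\nonquin[\ve V]$.

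Next I would estimate each of these remainder terms in $H^n$ using the Sobolev algebra property of $H^{n+1}(\R)$ together with the explicit formulas for $V_0,V_1,V_2$ and their $\tau$-derivatives. Inspection of these formulas in combination with \eqref{psieqn} shows that $V_j$ costs $j$ additional $x$-derivatives of $\Psi$ and that each $\tau$-derivative adds two further $x$-derivatives, so the worst term $\ve^5 V_{2\tau}$ needs $v\in H^{n+4}$, matching the hypothesis $n_v\ge n+4$. The quintic remainder is handled by applying \eqref{nonquinest} to $\ve V$: for $\ve$ small enough that $\|\ve V\|_{H^{n+1}}<1$ one obtains $\|\nonquin[\ve V]\|_{H^n}\le C\ve^5\|V\|_{H^{n+1}}^5$ uniformly in $t$. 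Taking the supremum over $t\in[0,T/\ve^2]$ then gives \eqref{resest}. For \eqref{diff'}, substituting $\Psi=\P[v]=\tfrac12(v+i\hilbert[v])$ into \eqref{V0} yields $V_0(x,t,\tau)=v(x,\tau)\cos t+\hilbert[v](x,\tau)\sin t$, so that
\[
\ve\left[v(\cdot,\ve^2 t)\cos t+\hilbert[v](\cdot,\ve^2 t)\sin t\right]-\ve V(\cdot,t;\ve)=-\ve^2 V_1-\ve^3 V_2,
\]
whose $H^n$-norm is $O(\ve^2)$ uniformly in $t$ by the same bounds applied to $V_1$ (needing $v\in H^{n+1}$ through \eqref{ve2a}) and $V_2$ (needing $v\in H^{n+2}$ through the explicit formulas for $\Psi_{21},\Psi_{23}$).

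The main obstacle is careful bookkeeping rather than any single deep estimate. One must verify that every surviving term in the residual genuinely carries $\ve^4$ or higher, and in particular that the resonant modes $e^{\mp it}$ arising at order $\ve^3$ are absorbed into the solvability condition \eqref{psieqn} and do not produce secular growth over the long window $[0,T/\ve^2]$. Once that is checked and the derivative counts for $V_{1\tau}$ and $V_{2\tau}$ are matched against $n_v\ge n+4$, the remaining work reduces to standard Sobolev product and Hilbert-transform estimates combined with the uniform a priori bound $\|v\|_{C([0,T];H^{n_v})}\le C$.
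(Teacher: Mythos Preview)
Your proposal is correct and follows essentially the same approach as the paper: substitute \eqref{V} into \eqref{res}, use \eqref{N5-1} and the equations \eqref{ve1'}--\eqref{ve3'} to cancel the $O(\ve)$, $O(\ve^2)$, $O(\ve^3)$ contributions, then estimate the explicit $\ve^4$-and-higher remainder terms (including $\nonquin[\ve V]$ via \eqref{nonquinest}) in $H^n$ by counting derivatives on $\Psi$, which tops out at four and matches $n_v\ge n+4$; the paper likewise notes that \eqref{diff'} is immediate from the construction. Your derivative bookkeeping and your remark that the solvability condition \eqref{psieqn} prevents secular growth at $O(\ve^3)$ are exactly the points the paper relies on.
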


\begin{proof}
We constructed $\ve V \in C([0, T], H^{n_v - 2}(\R))$ in Section~\ref{sec:approx} in terms of $\Psi=\P[v]$. Using \eqref{V} and \eqref{N5-1} in \eqref{res}, together with the cancellations in \eqref{ve1'}--\eqref{ve3'}, we compute that its residual is given by
\begin{align*}
\Res(\ve V) & =  - \ve^4 \bigg[V_{1 \tau} + \frac{\rho}{2} (V_1^2)_x + \rho (V_0 V_2)_x\bigg] - \ve^5 \Big[V_{2 \tau} + \rho (V_1 V_2)_x\Big] - \ve^6 \Big[\rho V_2 V_{2 x}\Big]\\
& \qquad - \frac{\sigma}{2} \sum_{p = 1}^6 \sum_{\substack{0 \leq j ,k ,\ell \leq 2\\ j + k + \ell = p}} \ve^{p + 3} \px \left\{V_j V_k |\px| V_\ell - V_j |\px| (V_k V_\ell) + \frac{1}{3} |\px| (V_j V_k V_\ell)\right\} - \sigma \nonquin[\ve V].
\end{align*}
From  the expressions for $V_k$ with $0 \leq k \leq 2$, we see that there are at most four $x$-derivatives on $\Psi$ in all of the terms that involve the $V_k$, so their  $H^n$-norm can be estimated in terms of $\|v\|_{H^{n+4}}$. Similarly, we can use  Lemma~\ref{lem:noncub} to estimate the $H^n$-norm of $\nonquin[\ve V] = O(\ve^5)$, which gives \eqref{resest}.

The second inequality \eqref{diff'} follows directly from the construction of $V$.
\end{proof}

\section{A modified energy for the error}
\label{sec:bherr}

Given $\vp_0\in H^n$, let $\vp$ denote the solution of \eqref{cde'} with initial data $\vp(\cdot,0) = \ve \vp_0$, and let $\ve V$ denote the approximate solution \eqref{V} constructed from an asymptotic solution $v$ with the properties stated in Theorem~\ref{thm:euler-cub}.
We define a scaled error $R$ between the full and approximate solutions by
\begin{equation}
\ve^\beta R = \vp - \ve V.
\label{defR}
\end{equation}
In the following, we choose $\beta = 2$ and estimate $\|R\|_{H^n}$ for all sufficiently small $\ve > 0$, but we continue to denote the exponent by $\beta$ in order to make it easier to keep track of the error terms.
We note that \eqref{vests} and \eqref{diff'} ensure that $\|R\|_{H^n} = O(1)$ as $\ve\to 0^+$ at $t=0$.

Using $\vp = \ve^\beta R + \ve V$ in \eqref{cde2}, we obtain that
\begin{align}
\label{err}
R_t + \ve^\beta \rho R R_x + \ve \rho (V R)_x + \ve^{-\beta} \sigma (\noncub[\ve^\beta R + \ve V] - \noncub[\ve V] )  = \hilbert[R] + \ve^{-\beta} \Res(\ve V),
\end{align}
where $\Res(\ve V)$ is defined by \eqref{res}.

We will see that the term $\ve^{-\beta} \sigma \big(\noncub[\ve^\beta R + \ve V] - \noncub[\ve V]\big)$ is of the order $\ve^2$. The most dangerous terms in \eqref{err} are $\ve(V R)_x$ and $\ve^\beta RR_x$. They can be removed by a normal form transformation
$R\mapsto \bar{R}$ where
\begin{align}
\label{nf}
\bar{R} = R + \ve \rho \hilbert\left[\hilbert[V] \hilbert[R]\right]_x + \frac{1}{2} \ve^\beta\rho \hilbert\left[(\hilbert[R])^2\right]_x,
\end{align}
which yields a cubically nonlinear equation for $\bar{R}$. However, this equation contains second-order spatial derivatives in the nonlinearity, resulting in a loss of derivatives in its energy estimates, and the straightforward normal form transformation \eqref{nf} is not effective.

Following \cite{CW17,Dul17, HITW15}, we instead use \eqref{nf} to define a modified energy that is obtained by neglecting the
higher-order terms with the most derivatives from $\int |\partial^n \bar{R}|^2 \diff x$, where $n\ge 0$ is an integer. This procedure gives
\begin{align}
\label{eng}
E_n = \int |\partial^n R|^2 \diff x + 2 \ve \rho \int \partial^{n + 1} \hilbert\left[\hilbert[V] \hilbert[R]\right] \partial^n R \diff x + \ve^\beta \rho \int \partial^{n + 1} \hilbert\left[(\hilbert[R])^2\right] \partial^n R \diff x.
\end{align}
The first term in \eqref{eng} is the standard $\dot{H}^n$-energy of $R$, the second term cancels the leading order effect of $\ve (V R)_x$ on the time evolution of this norm, and the third term cancels the effect of $\ve^\beta RR_x$. For $n\in \N$, we then define the nonhomogeneous energy
\begin{align}
\label{toteng}
E = E_0 + E_n.
\end{align}
As stated in the next lemma, this energy is equivalent to the $H^n$-energy of $R$ for sufficiently small $\ve$.
\begin{lemma}
\label{lem:En}
Let $n \geq 2$ be an integer and $M >0$ any positive real number. Define $E$ by  \eqref{eng}--\eqref{toteng}, where
\begin{equation}
\|V\|_{H^{n + 2}} + \|R\|_{H^2} \le M.
\label{VRest}
\end{equation}
Then there exists a constant $c = c(n,M) > 0$ such that
\[
\frac{1}{c} \|R\|_{H^n}^2 \leq E \leq c \|R\|_{H^n}^2
\]
 for all sufficiently small $\ve > 0$.
\end{lemma}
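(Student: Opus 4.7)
The plan is to regard $E$ as a perturbation of the $H^n$-norm. Writing $E = \|R\|_{L^2}^2 + \|\partial^n R\|_{L^2}^2 + K_0 + K_n$, where $K_0$ and $K_n$ collect the two correction integrals in $E_0$ and $E_n$ respectively, and using that $\|R\|_{L^2}^2 + \|\partial^n R\|_{L^2}^2$ is equivalent to $\|R\|_{H^n}^2$, it would suffice to show $|K_0| + |K_n| \le \tfrac{1}{2}\|R\|_{H^n}^2$ for all sufficiently small $\varepsilon$.

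The $E_0$ corrections are straightforward. Integrating by parts using $\hilbert^* = -\hilbert$ and the fact that $\hilbert$ commutes with $\partial$, the cubic piece becomes a perfect derivative,
\[
\int \partial\hilbert[(\hilbert[R])^2]\,R\,dx = \tfrac{1}{3}\int \partial(\hilbert[R])^3\,dx = 0,
\]
while the quadratic piece equals $-\tfrac{1}{2}\int\hilbert[V_x](\hilbert[R])^2\,dx$, bounded by $C\varepsilon\|\hilbert[V_x]\|_{L^\infty}\|R\|_{L^2}^2 \le C\varepsilon M\|R\|_{H^n}^2$ via the Sobolev embedding $H^s \hookrightarrow L^\infty$ for $s > \tfrac{1}{2}$ together with $\|V\|_{H^{n+2}} \le M$.

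For $K_n = I_1 + I_3$, my plan is to apply the same trick combined with a Leibniz expansion. After moving $\hilbert$ by $\hilbert^* = -\hilbert$,
\[
I_1 = -2\varepsilon\rho\sum_{k=0}^{n+1}\binom{n+1}{k}\int\partial^k\hilbert[V]\cdot\partial^{n+1-k}\hilbert[R]\cdot\hilbert[\partial^n R]\,dx.
\]
The only genuinely troublesome term is $k=0$, which contains $\partial^{n+1}\hilbert[R]\notin L^2$. I would handle it via the pointwise identity $\partial^{n+1}\hilbert[R]\cdot\hilbert[\partial^n R] = \tfrac{1}{2}\partial\bigl((\hilbert[\partial^n R])^2\bigr)$ followed by one further integration by parts, yielding $\varepsilon\rho\int\hilbert[V_x](\hilbert[\partial^n R])^2\,dx = O(\varepsilon M\|R\|_{H^n}^2)$. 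For $1\le k\le n+1$, at most $n$ derivatives fall on $\hilbert[R]$, so a Hölder estimate with $\|\partial^k\hilbert[V]\|_{L^\infty}\lesssim \|V\|_{H^{k+1}}\le M$ delivers the same order. The cubic integral $I_3$ is treated analogously: the symmetric $k=0$ and $k=n+1$ terms combine, by the same identity, to $\varepsilon^\beta\rho\int\hilbert[R_x](\hilbert[\partial^n R])^2\,dx \le C\varepsilon^\beta M\|R\|_{H^n}^2$ using $\|\hilbert[R_x]\|_{L^\infty}\lesssim\|R\|_{H^2}\le M$, while the boundary indices $k=1,n$ contribute a single $\hilbert[R_x]$ factor controllable by $M$.

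The main obstacle will be the interior Leibniz terms $2\le k\le n-1$ of $I_3$ (present only when $n\ge 3$): the two factors $\partial^k\hilbert[R]$ and $\partial^{n+1-k}\hilbert[R]$ have derivative indices in $[2,n-1]$, so neither sits in $L^\infty$ from $\|R\|_{H^2}\le M$ alone. I would control them by Gagliardo-Nirenberg interpolation, $\|\partial^k R\|_{L^\infty}\lesssim \|R\|_{L^\infty}^{1-\theta_k}\|\partial^n R\|_{L^2}^{\theta_k}$ with $\theta_k=2k/(2n-1)\in(0,1)$, yielding contributions of the form $C\varepsilon^\beta M^{1-\theta_k}\|R\|_{H^n}^{2+\theta_k}$. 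Since the threshold $\varepsilon_0$ in the conclusion may depend on the (fixed) value of $\|R\|_{H^n}$ in addition to $n$ and $M$, the extra positive power of $\|R\|_{H^n}$ can be absorbed into the smallness of $\varepsilon$. Combining all four bounds then gives $|K_0|+|K_n|\le C\varepsilon\|R\|_{H^n}^2$, and picking $\varepsilon$ sufficiently small produces the stated equivalence.
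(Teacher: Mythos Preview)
Your overall strategy---treating $E$ as a perturbation of $\|R\|_{H^n}^2$ and bounding the correction integrals via skew-adjointness of $\hilbert$, Leibniz expansion, and integration by parts---is the same as the paper's, and your handling of $E_0$, of $I_1$, and of the endpoint Leibniz terms in $I_3$ is correct.

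There is, however, a genuine gap in your treatment of the interior Leibniz terms $2\le k\le n-1$ of $I_3$. Your Gagliardo--Nirenberg interpolation produces the bound $C\varepsilon^\beta M^{1-\theta_k}\|R\|_{H^n}^{2+\theta_k}$, and you then assert that ``the threshold $\varepsilon_0$ in the conclusion may depend on the (fixed) value of $\|R\|_{H^n}$.'' This is not permitted: in the lemma $\|R\|_{H^n}$ is the quantity being characterized, not a fixed parameter, and only $\|R\|_{H^2}\le M$ is assumed. Allowing $\varepsilon_0$ to depend on $\|R\|_{H^n}$ would make the lemma useless in the bootstrap of Section~\ref{sec:bhest}, where $\|R\|_{H^n}$ is only known to be $O(\varepsilon^{-1})$.

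The fix is to choose better interpolation endpoints. Instead of interpolating $\|\partial^k\hilbert[R]\|_{L^\infty}$ between $\|R\|_{L^\infty}$ and $\|\partial^n R\|_{L^2}$, place the two intermediate factors in $L^p$ and $L^q$ with $\tfrac{1}{p}+\tfrac{1}{q}=\tfrac{1}{2}$, and interpolate each between $\|\hilbert[R_x]\|_{L^\infty}$ (controlled by $\|R\|_{H^2}\le M$) and $\|\partial^n\hilbert[R]\|_{L^2}$. The Gagliardo--Nirenberg exponents then sum to exactly $1$, giving
\[
\bigl|\textstyle\int\partial^k\hilbert[R]\,\partial^{n+1-k}\hilbert[R]\,\partial^n\hilbert[R]\,dx\bigr|
\;\lesssim\; \|\hilbert[R_x]\|_{L^\infty}\,\|\partial^n R\|_{L^2}^2
\;\le\; C M\,\|R\|_{H^n}^2,
\]
which is exactly the bound $O(\varepsilon^\beta\|\hilbert[R_x]\|_{L^\infty})\|\partial^n R\|_{L^2}^2$ the paper records (citing \cite{HITW15}). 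Equivalently, invoke the standard product inequality $\|\partial^a f\,\partial^b f\|_{L^2}\lesssim\|f\|_{L^\infty}\|\partial^{a+b}f\|_{L^2}$ with $f=\hilbert[R_x]$, $a=k-1$, $b=n-k$. With this correction your argument goes through and yields the desired equivalence with $c=c(n,M)$ and $\varepsilon_0=\varepsilon_0(n,M)$.
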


\begin{proof}
For the first and third terms in \eqref{eng}, as in the proof of Lemma~2 in \cite{HITW15}, we have
\[
\int |\partial^n R|^2 \diff x + \ve^\beta \rho \int \partial^{n + 1} \hilbert\left[(\hilbert[R])^2\right] \partial^n R \diff x = \left[1 + O(\ve^\beta \|\hilbert[R_x]\|_{L^{\infty}})\right] \|\partial^n R\|_{L^2}^2 \qquad \text{as}\ \ve \to 0.
\]
Using the skew-adjointness of $\hilbert$, integration by parts, and H\"{o}lder's inequality, we can estimate the second term in \eqref{eng} by
\begin{align*}
& \left| 2 \ve \rho\int \partial^{n + 1} \hilbert\left[\hilbert[V] \hilbert[R]\right] \partial^n R \diff{x}\right|\\
= ~& \left|\ve \rho \int \hilbert[V_x] (\partial^n \hilbert[R])^2 \diff{x} -  2 \ve \rho\sum_{j = 0}^n \begin{pmatrix}n + 1\\ j\end{pmatrix} \partial^{n + 1 - j} \int \hilbert[V] \partial^j \hilbert[R] \partial^n \hilbert[R] \diff{x}\right|
\\
\leq ~& C \ve \left(\|\hilbert[ V_x]\|_{W^{n, \infty}} \|\partial^n R\|_{L^2}^2 + \|\hilbert[ V_x]\|_{W^{n, \infty}}\|R\|_{L^2}^2 \right) .
\end{align*}
The same estimates with $n=0$ hold for $E_0$, which controls $\|R\|_{L^2}$.
By \eqref{VRest} and Sobolev embedding,
\[
\|\hilbert[ V_x]\|_{W^{n, \infty}} +  \|\hilbert[R_x]\|_{L^{\infty}}\le M,
\]
so the equivalence of $E$ with $\|R\|_{H^n}$ follows for sufficiently small $\epsilon >0$.
\end{proof}

In the following, we fix an integer $n \ge 3$ in the energy \eqref {toteng} for equation \eqref{cde'} with $\sigma\ne 0$,
or $n\ge 2$ for the Burgers-Hilbert equation with $\sigma=0$.

\section{Modified energy estimates}
\label{sec:bhest}

In the rest of the paper, we prove that there exists a constant $C$, independent of $\ve$, such that the energy $E = E_0 + E_n$ defined in \eqref{eng}--\eqref{toteng} satisfies the estimate
\begin{align}
\label{engest}
E(t) \leq C \qquad \text{for all $0 \le t \le T/\ve^{2}$}
\end{align}
for all sufficiently small $\ve > 0$.
Then, by the equivalence of $E$ with the $H^n$-energy of $R$, we have
$\|R(\cdot,t)\|_{H^n} \le C$, and from definition of $R$ in \eqref{defR}, we obtain that
 \begin{equation}
 \|\vp(\cdot,t) - \ve V(\cdot,t;\ve)\|_{H^n} \le C\ve^\beta\qquad \text{for all $0 \le t \le T/\ve^{2}$},
 \label{engest1}
 \end{equation}
 where $\beta=2$.
Combining this result with \eqref{diff'} in Lemma~\ref{lem:bhres}, we obtain Theorem~\ref{thm:euler-cub}.

The main part of the proof is an \emph{a priori} estimate for $E$ when $R$ is a sufficiently smooth solution of \eqref{err}. It suffices to consider the evolution of $E_n$, since $E_0$ can be shown to satisfy the same estimate with $n$ replaced by $0$; in fact, the estimate for $E_0$ is easier.

In proving \eqref{engest}, we will use the following commutator estimate whose proof can be found in \cite{DMP08}.
\begin{lemma}
\label{lem:comm}
Let $\hilbert$ denote the Hilbert transform. Then for any $p \in (1, \infty)$, $\ell_1, \ell_2 \in \N$, $f \in L^p$, and
$a \in W^{{\ell_1 + \ell_2},\infty}$,
there exists $C = C(p, \ell_1, \ell_2) > 0$ such that
\[
\|\partial^{\ell_1} [\hilbert, a] \partial^{\ell_2} f\|_{L^p} \leq C \|\partial^{\ell_1 + \ell_2} a\|_{L^\infty} \|f\|_{L^p}.
\]
\end{lemma}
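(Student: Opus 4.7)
The plan is to reduce the estimate, via commutator identities, to a bound on $[\hilbert, b]\partial^k$ for various $b$ and $k$, and then to invoke the Calder\'on commutator machinery. Because $\hilbert$ is a Fourier multiplier it commutes with $\partial$, and Leibniz's rule gives the operator identity
\[
\partial[\hilbert, a] = [\hilbert, \partial a] + [\hilbert, a]\partial.
\]
Iterating this yields
\[
\partial^{\ell_1}[\hilbert, a]\partial^{\ell_2} f = \sum_{j = 0}^{\ell_1} \binom{\ell_1}{j} [\hilbert, \partial^j a]\partial^{\ell_1 + \ell_2 - j} f.
\]
Setting $b = \partial^j a$ and $k = \ell_1 + \ell_2 - j$, matters reduce to proving
\[
\|[\hilbert, b]\partial^k g\|_{L^p} \le C\|\partial^k b\|_{L^\infty}\|g\|_{L^p}
\]
for each $0 \le k \le \ell_1 + \ell_2$, because then $\partial^k b = \partial^{\ell_1 + \ell_2} a$ on the nose, and the original estimate follows upon summing in $j$.

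For this reduced claim, the case $k = 0$ is immediate from the $L^p$-boundedness of $\hilbert$, and the case $k = 1$ is Calder\'on's first commutator theorem: a single integration by parts in the kernel representation gives
\[
[\hilbert, b]\partial g(x) = \hilbert(b' g)(x) - \frac{1}{\pi}\,\mathrm{p.v.}\int \frac{b(x) - b(y)}{(x-y)^2} g(y)\diff y,
\]
in which the first summand is bounded in $L^p$ by $\|b'\|_{L^\infty}\|g\|_{L^p}$ using the boundedness of $\hilbert$, and the second is the classical Calder\'on commutator, with operator norm $\le C_p \|b'\|_{L^\infty}$ on $L^p$ for every $p \in (1, \infty)$. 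For $k \ge 2$, I would proceed either inductively, by combining Calder\'on's first estimate with paraproduct decompositions of $b$ and $g$ that isolate frequency-localized interactions, or by appealing directly to the higher-order Calder\'on commutator bounds of Coifman, McIntosh, and Meyer.

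The main obstacle is precisely the step $k \ge 2$: a naive $k$-fold integration by parts onto the kernel $(b(x) - b(y))/(x - y)$ would produce a bound involving $\|\partial^{k+1} b\|_{L^\infty}$, which is one derivative more than the lemma permits. Avoiding this loss requires exploiting cancellations in the principal-value structure of the repeated commutators, which is precisely where the deep Calder\'on-Zygmund and Coifman-Meyer singular integral machinery enters. Once the reduced bound is in hand, the Leibniz expansion above assembles the full estimate, with the constant $C(p, \ell_1, \ell_2)$ arising from the binomial coefficients and the constants in the underlying Calder\'on commutator bounds.
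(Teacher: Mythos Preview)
The paper does not prove this lemma; it merely cites \cite{DMP08} (Dawson--McGahagan--Ponce), where the argument is carried out. Your reduction via the operator Leibniz identity $\partial^{\ell_1}[\hilbert,a]\partial^{\ell_2}=\sum_{j}\binom{\ell_1}{j}[\hilbert,\partial^{j}a]\,\partial^{\ell_1+\ell_2-j}$ followed by an appeal to Calder\'on commutator estimates is precisely the strategy of that reference, so there is no independent ``paper's proof'' to contrast with.

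Your sketch is correct in outline. There is an inessential sign slip in your $k=1$ formula: with the paper's convention $\hilbert f(x)=\tfrac{1}{\pi}\,\pv\!\int\frac{f(y)}{x-y}\diff y$ one obtains $[\hilbert,b]\partial g = -\hilbert(b'g) + \tfrac{1}{\pi}\,\pv\!\int\frac{b(x)-b(y)}{(x-y)^{2}}\,g(y)\diff y$, i.e., the overall sign is opposite to what you wrote, but this does not affect the $L^p$ bound. The only substantive gap is the case $k\ge 2$, which you yourself flag: a naive $k$-fold integration by parts does cost an extra derivative of $b$, and the fix---expressing $[\hilbert,b]\partial^{k}$ as a finite combination of higher Calder\'on-type singular integrals, each bounded on $L^p$ with constant controlled by $\|\partial^{k}b\|_{L^{\infty}}$---is exactly what \cite{DMP08} supplies in detail. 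Filling in that step (or simply invoking the reference, as the paper does) completes your argument.
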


Time differentiating \eqref{eng}, we obtain that
\begin{align*}
\frac{1}{2} \frac{\diff}{\diff{t}}E_n = ~&\int \partial^n R \partial^n R_t \diff x + \ve \rho \int \partial^{n+1} \hilbert\left[\hilbert[V_t] \hilbert[R]\right] \partial^n R \diff x \\
&+ \ve \rho \int \partial^{n+1} \hilbert\left[\hilbert[V] \hilbert[R_t]\right] \partial^n R \diff x + \ve \rho \int \partial^{n+1} \hilbert\left[\hilbert[V] \hilbert[R]\right] \partial^n R_t \diff x\\
&+ \ve^\beta \rho \int \partial^{n + 1} \hilbert\left[\hilbert[R] \hilbert[R_t]\right] \partial^n R \diff{x} + \frac{\ve^\beta}{2} \rho \int \partial^{n+1} \hilbert\left[(\hilbert[R])^2\right] \partial^n R_t \diff x.
\end{align*}
Using \eqref{res} to eliminate $\ve V_t$ in terms of $\Res(\ve V)$ and \eqref{err} and to eliminate $R_t$, we get that
\begin{align}
\label{dtE}
\begin{split}
\frac{1}{2} \frac{\diff}{\diff{t}}E_n = ~&\int \partial^n R \partial^n \hilbert[R] \diff x  - \ve \rho \int \partial^n R \partial^{n + 1} (V R) \diff x +  \ve \rho \int \partial^{n + 1} \hilbert[ \hilbert^2[V] \hilbert[R]] \partial^n R \diff x \\
& + \ve \rho \int \partial^{n + 1} \hilbert[ \hilbert[V] \hilbert^2[R]] \partial^n R \diff x+ \ve \rho \int \partial^{n + 1} \hilbert[ \hilbert[V] \hilbert[R]] \partial^n \hilbert[R] \diff x\\
&- \ve^\beta \rho \int \partial^n R \partial^n (R R_x) \diff{x} + \ve^\beta \rho \int \partial^{n + 1} \hilbert\left[\hilbert[R] R\right] \partial^n R \diff{x}\\
&+ \frac{\ve^\beta}{2} \rho \int \partial^{n + 1} \hilbert\left[(\hilbert[R])^2\right] \partial^n \hilbert[R] \diff{x} + \rho^2 (I_1 + I_2 + I_3 + I_4 + I_5 + I_6 + I_7 + I_8 + I_9)\\
& + \rho (I_{11} + I_{12} + I_{13}) + I_{10} - \sigma (J_1 + J_2 + J_3 + J_4 + J_5 + J_6),
\end{split}
\end{align}
where, as we will show, the leading order terms cancel, and $I_\ell$ ($\ell = 1, \dotsc, 13$), $J_k$ ($k = 1, \dotsc, 6$) are terms of order
$\ve^2$ or higher. The terms $I_\ell$ are given explicitly by
\begin{alignat*}{2}
I_1 &=  - \ve^2 \int \partial^{n + 1} \hilbert[\hilbert[V V_x] \hilbert[R]] \partial^n R \diff{x}, &&\hspace{.1in}
I_2 = - \ve^2 \int \partial^{n + 1} \hilbert[\hilbert[V] \partial \hilbert[V R]] \partial^n R \diff{x},\\
I_3 &=  -\ve^2 \int \partial^{n + 1} \hilbert[\hilbert[V] \hilbert[R]] \partial^{n + 1} (V R) \diff{x}, &&\hspace{.1in}
I_4 =  - \ve^{\beta + 1} \int \partial^{n + 1} \hilbert[\hilbert[V] \hilbert[R]] \partial^n (R R_x) \diff{x},\\
I_5 &= - \ve^{\beta + 1} \int \partial^{n + 1} \hilbert[\hilbert[V] \hilbert[R R_x]] \partial^n R \diff{x}, &&\hspace{.1in}
I_6 = - \ve^{\beta + 1} \int \partial^{n + 1} \hilbert\left[\hilbert[R] \hilbert[VR]_x\right] \partial^n R \diff{x},\\
I_7 & = - \frac{\ve^{\beta + 1}}{2} \int \partial^{n + 1} \hilbert\left[(\hilbert[R])^2\right] \partial^{n + 1}(VR) \diff{x}, &&\hspace{.1in}
I_8 =- \ve^{2 \beta} \int \partial^{n + 1} \hilbert\left[\hilbert[R] \hilbert[R R_x]\right] \partial^n R \diff{x},\\
I_9 & = - \frac{\ve^{2 \beta}}{2} \int \partial^{n + 1} \hilbert\left[(\hilbert[R])^2\right] \partial^n (R R_x) \diff{x},&&\hspace{.1in}
I_{10} = \ve^{- \beta} \int \partial^n R \partial^n (\Res(\ve V)) \diff{x},\\
I_{11} & = \ve^{- \beta + 1} \int \partial^{n + 1} \hilbert[\hilbert[V] \hilbert[\Res(\ve V)]] \partial^n R \diff{x},  &&\hspace{.1in}
I_{12} = \ve^{- \beta + 1} \int \partial^{n+1} \hilbert[ \hilbert[V] \hilbert[R]] \partial^n \Res(\ve V) \diff x,\\
I_{13} & = \frac{1}{2} \int \partial^{n + 1} \hilbert\left[(\hilbert[R])^2\right] \partial^n \Res(\ve V) \diff{x},
\end{alignat*}
while the terms $J_k$ are given by
\begin{align*}
J_1 &= \ve^{-\beta}\int \partial^n R \partial^n \left(\noncub[\ve^\beta R + \ve V] - \noncub[\ve V]\right) \diff{x},
\\
 J_2 &= \ve \rho \int \partial^{n+1} \hilbert[ \hilbert[\noncub[\ve V]] \hilbert[R]] \partial^n R \diff x,\\
J_3 &= \ve^{-\beta + 1} \rho \int \partial^{n+1} \hilbert\left[\hilbert[V] \hilbert\left[\noncub[\ve^\beta R + \ve V] - \noncub[\ve V]\right]\right] \partial^n R \diff x,\\
J_4 &= \ve^{-\beta + 1} \rho \int \partial^{n+1} \hilbert[ \hilbert[V] \hilbert[R]] \partial^n \left(\noncub[\ve^\beta R + \ve V] - \noncub[\ve V]\right) \diff x,\\
J_5 &= \rho \int \partial^{n + 1} \hilbert\left[\hilbert[R] \hilbert\left[\noncub[\ve^\beta R + \ve V] - \noncub[\ve V]\right]\right] \partial^n R \diff{x},\\
J_6 & = \frac{\rho}{2} \int \partial^{n + 1} \hilbert\left[(\hilbert[R])^2\right] \partial^n \left(\noncub[\ve^\beta R + \ve V] - \noncub[\ve V]\right) \diff{x}.
\end{align*}

The first term on the right-hand side of  \eqref{dtE} vanishes due to the skew-adjointedness of the Hilbert transform. Making use of  the skew-adjointness of $\hilbert$, the fact that $\hilbert^2 = -\I$, and the Cotlar identity
\[
\hilbert\left[a b - \hilbert[a] \hilbert[b]\right] = a \hilbert[b] + b \hilbert[a],
\]
we find (as a consequence of the choice of the modified energy) that the terms of the order $\ve$ on the right-hand side of in \eqref{dtE} also vanish:
\begin{align*}
&- \int \partial^n R \partial^{n + 1} (V R) \diff x + \int \partial^{n + 1} \hilbert[ \hilbert^2[V] \hilbert[R]] \partial^n R \diff x \\
& \quad + \int \partial^{n + 1} \hilbert[ \hilbert[V] \hilbert^2[R]] \partial^n R \diff x+\int \partial^{n + 1} \hilbert[ \hilbert[V] \hilbert[R]] \partial^n \hilbert[R] \diff x\\
= ~& \int \partial^n R \partial^{n+1}[ - V R - \hilbert[V \hilbert[R]] - \hilbert[R \hilbert[V]] + \hilbert[V]\hilbert[R] ] \diff{x} = 0.
\end{align*}
Similarly, we have
\begin{align*}
&- \ve^\beta \rho \int \partial^n R \partial^n (R R_x) \diff{x} + \ve^\beta \rho \int \partial^{n + 1} \hilbert\left[\hilbert[R] R\right] \partial^n R \diff{x} + \frac{\ve^\beta}{2} \rho \int \partial^{n + 1} \hilbert\left[(\hilbert[R])^2\right] \partial^n \hilbert[R] \diff{x}\\
= ~& \frac{\ve^\beta}{2} \rho \int \partial^{n + 1} R \partial^n \left[R^2 - 2 \hilbert\left[\hilbert[R] R\right] - (\hilbert[R])^2\right] \diff{x} = 0.
\end{align*}
Thus, \eqref{dtE} reduces to
\begin{align}
\label{dtE1}
\begin{split}
\frac{1}{2} \frac{\diff}{\diff{t}}E_n = ~&
 \rho^2 (I_1 + I_2 + I_3 + I_4 + I_5 + I_6 + I_7 + I_8 + I_9)\\
& + \rho (I_{11} + I_{12} + I_{13}) + I_{10} - \sigma (J_1 + J_2 + J_3 + J_4 + J_5 + J_6),
\end{split}
\end{align}
and it suffices to estimate the error terms $I_\ell$ ($\ell = 1, \dotsc, 13$) and $J_k$ ($k = 1, \dotsc, 6$).

In order to do this, we make a bootstrap assumption
\begin{align}
\label{btsp}
E^{1/2} \leq \frac{1}{\varepsilon} \qquad \text{for all $0\le t \le T/\ve^{2}$}.
\end{align}
We then show in the following subsections that all of the error terms can be estimated by $\ve^2$ or $\ve^2 E$, which closes the bootstrap and establishes \eqref{engest}.

\subsection{Quadratic terms of order $\ve^2$}\label{sec:bhhot1}
These terms are $I_1$--$I_3$.
In this subsection, we use $\mathscr{R}_1$ to denote terms that might change from line to line and satisfy the estimate
\[
|\Rs_1| \leq C \ve^2 E.
\]
Error terms that contain at most $n$ derivatives on $R$ satisfy this estimate by a combination of Sobolev embedding and the Cauchy-Schwarz inequality, while terms that contain strictly fewer than $n$ derivatives on one $R$ and $n+1$ derivatives on another $R$ can be converted into terms of the previous type by an integration by parts. Thus, we only need to consider terms with $n+1$ derivatives on two factors of $R$ and terms with $n$ derivatives on one $R$ and $n+1$ derivatives on another $R$.

After applying the Leibniz rule, the worst term in $I_1$ can be handled by integration by parts as follows:
\begin{align*}
I_1 &= -\ve^2\int \hilbert[V V_x] \partial^n \hilbert[R] \partial^{n+1} \hilbert[R] \diff{x} + \Rs_1\\
&= \frac{1}{2}\ve^2 \int \partial \hilbert[V V_x]( \partial^n \hilbert[R] )^2 \diff{x} + \Rs_1\\
& \leq C \ve^2 E.
\end{align*}

The terms $I_2$ and $I_3$ permit a cancellation when all the derivatives hit $R$ or $\hilbert[R]$. Integrating by parts and using the skew-adjointness of $\hilbert$, we obtain that
\begin{align*}
I_2 + I_3 &= -\ve^2\int \hilbert[V] \hilbert[V \partial^{n+1} R] ] \partial^{n+1} \hilbert[R] \diff x - (n + 1) \ve^2\int \hilbert[V] \hilbert[ \partial V \partial^n R] \partial^{n+1} \hilbert[R] \diff x\\
& \quad -n \ve^2\int \hilbert[\partial V] \hilbert[V \partial^n R] \partial^{n+1} \hilbert[R] \diff x + \ve^2\int \hilbert[V] \partial^{n+1} \hilbert[R] \hilbert[V \partial^{n+1} R] \diff x \\
& \quad + (n+1)\ve^2\int \hilbert[\partial V] \partial^n \hilbert[R] \hilbert[V \partial^{n+1} R] \diff x + (n+1) \ve^2\int \hilbert[V] \partial^{n+1} \hilbert[R] \hilbert[ \partial V \partial^n R] \diff x+ \Rs_1\\
&=  \ve^2 \int \hilbert[\partial V] \hilbert[V \partial^{n + 1} R] \partial^n \hilbert[R] \diff{x} + \Rs_1.
\end{align*}
For the first term on the right-hand side, we make use of Lemma~\ref{lem:comm} to get
\begin{align*}
& \ve^2\int \hilbert[\partial V] \hilbert[V \partial^{n + 1} R] \partial^n \hilbert[R] \diff x\\
&\qquad = \ve^2\int \hilbert[\partial V] V \partial^{n+1} \hilbert[R] \partial^n \hilbert[R] \diff x + \ve^2\int \hilbert[\partial V] \left([\hilbert, V] \partial^{n+1} R\right) \partial^n \hilbert[R] \diff x \\
&\qquad \leq C\ve^2 \Big(\|\partial^n R\|_{L^2}^2 + \left\| [\hilbert, V] \partial^{n+1} R \right\|_{L^2} \|R\|_{L^2}\Big) \\
&\qquad \leq C \ve^2 E.
\end{align*}

\subsection{Cubic terms of order $\ve^{\beta + 1}$}
 We now consider the terms $I_4$--$I_7$ of order $\ve^{\beta + 1}$. Since these terms are cubic in $R$, we bound them by $E^{3/2}$ and use the bootstrap assumption \eqref{btsp}. We denote by $\mathscr{R}_2$ terms that satisfy the estimate
 \[
 |\mathscr{R}_2| \leq C\ve^{\beta + 1} E^{3/2}.
 \]
 As before, terms with at most $n$ derivatives on $R$ satisfy this estimate, and the only terms that we cannot reduce to this case using integration by parts are ones that either contain $n+1$ derivatives on two $R$ factors or $n+1$ derivatives on one $R$ factor and $n$ derivatives on another $R$ factor.

We first estimate $I_4$ and $I_5$. By the Leibniz rule and the skew-adjointness of $\hilbert$, we have
\begin{align*}
I_4 &= \ve^{\beta + 1}\int \hilbert[V] \partial^{n+1}\hilbert[R] \hilbert[R \partial^{n+1}R] \diff x + n \ve^{\beta + 1}\int \hilbert[V] \partial^{n+1} \hilbert[R] \hilbert[\partial R \partial^n R] \diff x \\
~&\quad + (n+1) \ve^{4} \int \hilbert[\partial V] \partial^n \hilbert[R] \hilbert[R \partial^{n+1} R] \diff x + \mathscr{R}_2,\\
I_5 &= -\ve^{\beta + 1} \int \hilbert[V] \hilbert[ R \partial^{n+1} R ] \partial^{n+1} \hilbert[R] \diff x - (n+1) \ve^{\beta + 1} \int \hilbert[V] \hilbert[ \partial R \partial^n R] \partial^{n+1} \hilbert[R] \diff x + \mathscr{R}_2,
\end{align*}
so
\begin{align}
\label{I5+I6}
I_4 + I_5 = - \ve^{\beta + 1} \int \hilbert[V] \hilbert[ \partial R \partial^n R] \partial^{n+1} \hilbert[R] \diff x + (n+1) \ve^{\beta + 1} \int \hilbert[\partial V] \partial^n \hilbert[R] \hilbert[R \partial^{n+1} R] \diff x + \mathscr{R}_2.
\end{align}
Using integration by parts and the commutator estimate in Lemma~\ref{lem:comm}, we estimate the first term on the right-hand side of \eqref{I5+I6} by
\begin{align*}
& - \ve^{\beta + 1}\int \hilbert[V] \hilbert[ \partial R \partial^n R] \partial^{n+1} \hilbert[R] \diff x\\
=~& \frac{\ve^{\beta + 1}}{2} \int \hilbert[V]  \partial R \partial | \partial^n \hilbert[R]|^2 \diff x +\ve^{\beta + 1} \int \hilbert[V] \left([\hilbert, \partial R] \partial^{n+1} R\right) \partial^n \hilbert[R] \diff x + \mathscr{R}_2\\
=~& - \frac{\ve^{\beta + 1}}{2} \int \hilbert[V]  \partial^2 R | \partial^n \hilbert[R]|^2 \diff x + \ve^{\beta + 1}\int \hilbert[V] \left([\hilbert, \partial R] \partial^{n+1} R\right) \partial^n \hilbert[R] \diff x + \mathscr{R}_2\\
\leq~& C\ve^{\beta + 1} E^{3/2}.
\end{align*}
The second term in the right-hand side of \eqref{I5+I6} can be estimated similarly by
\begin{align*}
& - \ve^{\beta + 1}\int \hilbert[\partial V] \partial^n \hilbert[R] \hilbert[R \partial^{n+1} R] \diff x\\
=~& \frac{\ve^{\beta + 1}}{2} \int \partial \left(\hilbert[\partial V] R\right) |\partial^n \hilbert[R]|^2 \diff{x} - \ve^{\beta + 1}\int \hilbert[\partial V] \partial^n \hilbert[R] \left([\hilbert, R] \partial^{n+1} R\right) \diff x\\
\leq~& C \ve^{\beta + 1} E^{3/2}.
\end{align*}

The estimates for $I_6$ and $I_7$ are similar. Observe that
\begin{align*}
I_6 &= - \ve^{\beta + 1} \int \partial^n \hilbert[R] \partial \hilbert[V R] \partial^{n + 1} \hilbert[R] \diff{x} - \ve^{\beta + 1} \int \hilbert[R] \hilbert[V \partial^{n + 1} R] \partial^{n + 1} \hilbert[R] \diff{x}\\
& \quad - (n + 1) \ve^{\beta + 1} \int \hilbert[R] \hilbert[\partial V \partial^n R] \partial^{n + 1} \hilbert[R] \diff{x} - n \ve^{\beta + 1} \int \partial \hilbert[R] \hilbert[V \partial^n R] \partial^{n + 1} \hilbert[R] \diff{x} + \mathscr{R}_2,\\
I_7 & = \ve^{\beta + 1} \int \hilbert[R] \partial^{n + 1} \hilbert[R] \hilbert[V \partial^{n + 1}R] \diff{x} + (n + 1) \ve^{\beta + 1} \int \hilbert[R] \partial^{n + 1} \hilbert[R] \hilbert[\partial V \partial^n R] \diff{x}\\
& \quad + (n + 1) \ve^{\beta + 1} \int \partial \hilbert[R] \partial^n \hilbert[R] \hilbert[V \partial^{n + 1} R] \diff{x} + \mathscr{R}_2.
\end{align*}
We then estimate them together and use integration by parts and Lemma~\ref{lem:comm} to obtain
\begin{align*}
I_6 + I_7 & = - \ve^{\beta + 1} \int \partial^n \hilbert[R] \partial^{n + 1} \hilbert[R] \partial \hilbert[V R] \diff{x} - n \ve^{\beta + 1} \int \partial \hilbert[R] \partial^{n + 1} \hilbert[R] \hilbert[V \partial^n R] \diff{x}\\
& \quad + (n + 1) \ve^{\beta + 1} \int \partial \hilbert[R] \partial^n \hilbert[R] \hilbert[V \partial^{n + 1} R] \diff{x} + \mathscr{R}_2\\
& = - \ve^{\beta + 1} \int \partial^n \hilbert[R] \partial^{n + 1} \hilbert[R] \partial \left([\hilbert, V] [R]\right) \diff{x} - n \ve^{\beta + 1} \int \partial \hilbert[R] \partial^{n + 1} \hilbert[R] \left([\hilbert, V] [\partial^n R]\right) \diff{x}\\
& \quad + (n + 1) \ve^{\beta + 1} \int \partial \hilbert[R] \partial^n \hilbert[R] \left([\hilbert, V] [\partial^{n + 1} R]\right) \diff{x} + \mathscr{R}_2\\
& \leq C \ve^{\beta + 1} E^{3 / 2},
\end{align*}
where the last inequality follows from integration by parts and the commutator estimates in Lemma~\ref{lem:comm}.

Using the bootstrap assumption \eqref{btsp}, and the fact that $\beta = 2$, we then have
\[
I_4 + I_5 + I_6 + I_7 \leq C \ve^{\beta + 1} E^{3 / 2} \leq C\ve^{2} E.
\]

\subsection{Quartic terms of order $\ve^{2 \beta}$}
The only quartic terms of order $\ve^{2 \beta}$ are $I_8$ and $I_9$, which also need to be estimated together. Since these terms are quartic in $R$, we bound them by $E^2$ and use the bootstrap assumption \eqref{btsp}. Let $\mathscr{R}_3$ denote terms that satisfy the estimate
\[
|\mathscr{R}_3| \leq C \ve^{2 \beta} E^2.
\]
We first observe that
\begin{align*}
I_8
& = -\ve^{2 \beta} \int \partial^{n + 1} \hilbert[R] \partial^n \hilbert[R] \hilbert[R R_x] \diff{x} - \ve^{2 \beta} \int \hilbert[R] \partial^n \hilbert[R R_x] \partial^{n + 1} \hilbert[R] \diff{x}\\
& \quad - n \ve^{2 \beta} \int \partial \hilbert[R] \hilbert[R \partial^n R] \partial^{n + 1} \hilbert[R] \diff{x} + \mathscr{R}_3,\\
I_9 & = \ve^{2 \beta} \int \hilbert[R] \partial^{n + 1} \hilbert[R] \partial^n \hilbert[R R_x] \diff{x} + (n + 1) \ve^{2 \beta} \int \partial \hilbert[R] \partial^n \hilbert[R] \partial^n \hilbert[R R_x] \diff{x} + \mathscr{R}_3.
\end{align*}
After an integration by parts, the first term on the right-hand side of $I_8$ becomes
\[
-\ve^{2 \beta} \int \partial^{n + 1} \hilbert[R] \partial^n \hilbert[R] \hilbert[R R_x] \diff{x} = \frac{1}{2}\ve^{2 \beta} \int |\partial^n \hilbert[R]|^2 \partial \hilbert[R R_x] \diff{x},
\]
which can be absorbed into $\mathscr{R}_3$. Summing $I_8$ and $I_9$ and canceling the identical terms, we obtain
\begin{align*}
I_8 + I_9 & = -\ve^{2 \beta} \int \partial^{n + 1} \hilbert[R] \partial^n \hilbert[R] \hilbert[R R_x] \diff{x} - n \ve^{2 \beta} \int \partial \hilbert[R] \hilbert[R \partial^n R] \partial^{n + 1} \hilbert[R] \diff{x}\\
& \quad + (n + 1) \ve^{2 \beta} \int \partial \hilbert[R] \partial^n \hilbert[R] \partial^n \hilbert[R R_x] \diff{x} + \mathscr{R}_3\\
& = \ve^{2 \beta} \int |\partial^n \hilbert[R]|^2 \partial [H, R] R_x \diff{x} - n \ve^{2 \beta} \int \partial^{n + 1} \hilbert[R] [H, R] \partial^n R \partial \hilbert[R] \diff{x} + \mathscr{R}_3\\
& \leq C \ve^{2 \beta} E^2
\\
&\leq C\ve^2 E,
\end{align*}
where the second-to-last inequality follows from integration by parts, the commutator estimates Lemma~\ref{lem:comm}, and, in the case when $n=2$ for the Burgers-Hilbert equation, the following pointwise estimate for $\delta > 0$
\[
\|[\hilbert, R_x] R_x\|_{L^\infty} + \|[\hilbert, R] \partial^2 R\|_{L^\infty} \leq C \|R_x\|_{H^{\frac{1}{2} + \delta}}^2.
\]

\subsection{Terms involving the residual}
These terms are  $I_{10}$--$I_{13}$. We can directly use H\"{o}lder's inequality, Sobolev embeddings, and Lemma~\ref{lem:bhres} to obtain that
\begin{align*}
I_{10} &\leq \ve^{-\beta} E^{1/2} \| \Res(\ve V)\|_{H^n} \leq C \ve^{4 - \beta} E^{1 / 2},\\[1ex]
I_{11} &\leq \ve^{-\beta + 1} \|V\|_{H^{n+1}} \|\Res(\ve V)\|_{H^{n+1}} E^{1/2} \leq C \ve^{5 - \beta} E^{1 / 2},\\
I_{12} &= - \ve^{-2}\int \partial^n \hilbert[ \hilbert[V] \hilbert[R]] \partial^{n+1} \Res(\ve V) \diff x \\
&\leq \ve^{-\beta + 1} \|V\|_{H^n} \| \Res(\ve V) \|_{H^{n+1}}  E^{1/2} \leq C \ve^{5 - \beta} E^{1 / 2},\\
I_{13} & \leq C \|R\|_{H^n}^2 \|\Res(\ve V)\|_{H^{n + 1}} \leq C \ve^{5} E.
\end{align*}
Here, since we require the $H^{n+1}$-norm of the residual of the asymptotic solution $\ve V \in C([0,T/\ve^2], H^{n_v}$), we need to take $n_v \ge n+5$ in order to apply Lemma~\ref{lem:bhres}.

\subsection{Higher degree terms}

In this subsection, we estimate the terms $J_k$, $k = 1, \dotsc, 6$. These terms do not appear for the Burgers-Hilbert equation
with $\sigma = 0$.

We will use $\mathscr{R}$ to denote terms that involve lower order derivatives and satisfy a straightforward estimate
\[
|\mathscr{R}| \leq C\ve^{2} E.
\]
We also use the notation
\begin{equation}
\Delta_\zeta f(x) = f(x) - f(x+\zeta) \quad \text{and} \quad D_\zeta f(x) = \frac{\Delta_\zeta f(x)}{\zeta}
\label{def_Delta}
\end{equation}
to denote differences and difference quotients, where we show the dependence on  the spatial variables explicitly but suppress the time variable.

\subsubsection{Sobolev energy term  $J_1$}

Using \eqref{noncub} in the expression for $J_1$ and writing the result in terms of the notation in \eqref{def_Delta}, we get that
\begin{align*}
J_1
& = \frac{1}{2 \pi} \ve^{-\beta}\int \partial^n R(x) \partial^n \int \left[ \ve^\beta R_x(x) - \ve^\beta R_x(x+ \zeta) \right] \log\bigg[1 + \frac{[\ve^\beta \Delta_\zeta R(x) + \ve \Delta_\zeta V(x)]^2}{|\zeta|^2}\bigg] \diff{\zeta} \diff{x} \\
&+\frac{1}{2 \pi} \ve^{-\beta}\int \partial^n R(x) \partial^n \int \left[  \ve \Delta_\zeta V(x) \right]\bigg\{ \log\bigg[1 + \frac{[\ve^\beta \Delta_\zeta R(x) + \ve \Delta_\zeta V(x)]^2}{|\zeta|^2}\bigg]- \log\bigg[1 + \frac{[ \ve \Delta_\zeta V(x)]^2}{|\zeta|^2}\bigg] \bigg\}\diff{\zeta} \diff{x} \\
& = \frac{1}{2 \pi} \left(J_{1,1} + J_{1,2} + J_{1,3} + J_{1,4} + J_{1,5}\right) + \mathscr{R},
\end{align*}
where
\begin{align*}
J_{1,1} &= \iint \partial^n R(x) \partial^n R_x(x) \log\bigg[1 + \frac{[\ve^\beta \Delta_\zeta R(x) + \ve \Delta_\zeta V(x)]^2}{|\zeta|^2}\bigg] \diff{\zeta} \diff{x}, \\
J_{1,2} &= - \iint \partial^n R(x) \partial^n R_x(x+\zeta) \log\bigg[1 + \frac{[\ve^\beta \Delta_\zeta R(x) + \ve \Delta_\zeta V(x)]^2}{|\zeta|^2}\bigg] \diff{\zeta} \diff{x},  \\
J_{1,3} &= \iint \partial^n R(x)  \Delta_\zeta R_x(x) \partial^n \log\left[1 + \frac{[\ve^\beta \Delta_\zeta R(x) + \ve \Delta_\zeta V(x)]^2}{|\zeta|^2}\right] \diff{\zeta} \diff{x},   \\
J_{1,4} &= \ve^{-\beta + 1}\iint \partial^n R(x) \partial^n  \Delta_\zeta V(x) \bigg\{ \log\bigg[1 + \frac{[\ve^\beta \Delta_\zeta R(x) + \ve \Delta_\zeta V(x)]^2}{|\zeta|^2}\bigg]- \log\bigg[1 + \frac{[ \ve \Delta_\zeta V(x)]^2}{|\zeta|^2}\bigg] \bigg\}\diff{\zeta} \diff{x}, \\
J_{1,5} &= \ve^{- \beta + 1}\iint \partial^n R(x) \Delta_\zeta V(x) \partial^n \bigg\{ \log\bigg[1 + \frac{[\ve^\beta \Delta_\zeta R(x) + \ve \Delta_\zeta V(x)]^2}{|\zeta|^2}\bigg]- \log\bigg[1 + \frac{[ \ve \Delta_\zeta V(x)]^2}{|\zeta|^2}\bigg] \bigg\}\diff{\zeta} \diff{x}.
\end{align*}
When $\partial^n$ hits $R_x(x)$, we can form a total derivative and integrate by parts
\begin{align*}\
J_{1,1}
&~ = -\iint\ \frac{ |\partial^n R(x)|^2}{2} \partial_x \log\bigg[1 + \frac{[\ve^\beta \Delta_\zeta R(x) + \ve \Delta_\zeta V(x)]^2}{|\zeta|^2}\bigg] \diff{\zeta} \diff{x}\\
&~ = -\iint|\partial^n R(x)|^2 \frac{ [\ve^\beta \Delta_\zeta R(x) + \ve \Delta_\zeta V(x)] [\ve^\beta \Delta_\zeta R_x(x) + \ve \Delta_\zeta V_x(x)] }{ \zeta^2 + [\ve^\beta \Delta_\zeta R(x) + \ve \Delta_\zeta V(x)]^2 } \diff{\zeta} \diff{x}\\
&~ \leq \iint |\partial^n R(x)|^2 \frac{ [\ve^\beta \Delta_\zeta R(x) + \ve \Delta_\zeta V(x)] }{ \zeta } \cdot \frac{ [\ve^\beta \Delta_\zeta R_x(x) + \ve \Delta_\zeta V_x(x)] }{ \zeta }\diff{\zeta} \diff{x}\\
&~ = J_{1,1,1} + J_{1,1,2},
\end{align*}
where
\begin{align*}
J_{1,1,1} &= \int_\R \int_{ |\zeta| > 1}  |\partial^n R(x)|^2\frac{ [\ve^\beta \Delta_\zeta R(x) + \ve \Delta_\zeta V(x)] [\ve^\beta \Delta_\zeta R_x(x) + \ve \Delta_\zeta V_x(x)] }{ \zeta^2 } \diff{\zeta} \diff{x},\\
J_{1,1,2} &= \int_\R \int_{|\zeta| < 1} |\partial^n R(x)|^2 \frac{1}{\zeta^{1/2}}\bigg[ \frac{ \ve^\beta \Delta_\zeta R(x) }{\zeta} + \frac{\ve \Delta_\zeta V(x) }{ \zeta } \bigg] \cdot \bigg[\frac{ \ve^\beta \Delta_\zeta R_x(x)}{|\zeta|^{1/2}} + \frac{\ve \Delta_\zeta V_x(x) }{ |\zeta|^{1/2} } \bigg]\diff{\zeta} \diff{x}.
\end{align*}

When $|\zeta|$ is large, we use the Sobolev embedding theorem and the fact that $\zeta \mapsto \zeta^{-2}$ is integrable at infinity
to conclude that
\[
J_{1,1,1} \leq C \|\partial^n R\|_{L^2}^2 ( \ve^\beta \|R\|_{L^\infty} + \ve \|V\|_{L^\infty})( \ve^\beta \|R_x\|_{L^\infty} + \ve \|V_x\|_{L^\infty}).
\]
When $|\zeta|$ is small, we use the fact that $\zeta \mapsto |\zeta|^{-1/2}$ is locally integrable, and distribute the remaining $|\zeta|^{3/2}$ in the denominator to form difference quotients and Holder norms. We then bound the difference quotients by Sobolev norms
to get
\[
J_{1,1,2} \leq C \|\partial^n R\|_{L^2}^2 (\ve^\beta \|R_x\|_{L^\infty} + \ve \|V_x\|_{L^\infty} )( \ve^\beta \|R_x\|_{C^{0,1/2}} + \ve \|V_x\|_{C^{0,1/2}}).
\]
It follows from the Sobolev embedding $H^1(\R) \hookrightarrow C^{0,1/2}(\R)$ that $J_{1,1}$ satisfies the estimate
\[
J_{1, 1} \leq C (\ve^2 E + \ve^{\beta + 1} E^{3 / 2} + \ve^{2 \beta} E^2).
\]

We now consider the term $J_{1,2}$ that arises when $\partial^n$ hits $R_x(x+\zeta)$, where we convert a derivative in $x$ to a derivative in $\zeta$ and integrate by parts. It follows that
\begin{align*}
J_{1, 2}
&~ = -\int \partial^n R(x) \int \partial^n R(x+\zeta) \partial_\zeta \log\bigg[1 + \frac{[\ve^\beta \Delta_\zeta R(x) + \ve \Delta_\zeta V(x)]^2}{|\zeta|^2}\bigg] \diff{\zeta} \diff{x} \\
&~ = -2\iint \partial^n R(x) \partial^n R(x+\zeta)
\\
&\qquad \bigg\{  [\ve^\beta \Delta_\zeta R(x) + \ve \Delta_\zeta V(x)] \frac{ \zeta[\ve^\beta R_\zeta(x+\zeta) + \ve V_\zeta(x + \zeta)] - [\ve^\beta \Delta_\zeta R(x) + \ve \Delta_\zeta V(x)] }{ [\zeta^2 + [\ve^\beta \Delta_\zeta R(x) + \ve \Delta_\zeta V(x)]^2] \zeta } \bigg\} \diff{\zeta}\diff{x}\\
&~ \leq C \left(J_{1,2,1} + J_{1,2,2}\right),
\end{align*}
where
\begin{align*}
J_{1,2,1} &= \int_\R \int_{|\zeta| > 1} | \partial^n R(x)| | \partial^n R(x + \zeta)|
\\
&\qquad \bigg\{[\ve^\beta \Delta_\zeta R(x) + \ve \Delta_\zeta V(x)][\ve^\beta R_\zeta(x+\zeta) + \ve V_\zeta(x + \zeta)] + [\ve^\beta \Delta_\zeta R(x) + \ve \Delta_\zeta V(x)]^2\biggr\} \frac{\diff{\zeta}}{\zeta^2} \diff{x},\\
J_{1,2,2} &= \int_\R \int_{ |\zeta| < 1} | \partial^n R(x)| | \partial^n R(x + \zeta)|
\\
&\qquad \bigg[ \frac{\ve^\beta \Delta_\zeta R(x) }{\zeta} + \frac{ \ve \Delta_\zeta V(x)}{\zeta} \bigg]  \bigg[ \ve^\beta \frac{ R_\zeta(x+\zeta) - D_\zeta R(x) }{|\zeta|^{1/2}} + \ve \frac{ V_\zeta(x+\zeta) - D_\zeta V}{|\zeta|^{1/2} } \bigg] \frac{\diff{\zeta}}{|\zeta|^{1/2}} \diff{x}.
\end{align*}
The $J_{1,2,1}$ integral over $|\zeta| > 1$ is treated as before, which gives
\[
J_{1,2,1} \leq C \|\partial^n R\|_{L^2}^2 ( \ve^\beta \|R\|_{L^\infty} + \ve \|V\|_{L^\infty})^2 + C \|\partial^n R\|_{L^2}^2 ( \ve^\beta \|R\|_{L^\infty} + \ve \|V\|_{L^\infty})( \ve^\beta \|R_x\|_{L^\infty} + \ve \|V_x\|_{L^\infty}).
\]

To treat $J_{1,2,2}$, where the integral is over $|\zeta| < 1$, we form a difference quotient and use the H\"{o}lder-norm bound
\[
\left\| \frac{ f_\zeta(x + \zeta) - D_\zeta f(x) }{ \zeta^{1/2}} \right\|_{ L_{x,\zeta}^\infty} \leq C \| f \|_{C^{0,1/2}},
\]
which follows from the usual definition of the H\"{o}lder norm and the mean value theorem applied to the difference quotient $D_\zeta f(x)$. We obtain the bound

\begin{align*}
J_{1,2} \leq C \|\partial^n R \|_{L^2}^2 ( \ve^\beta \|R\|_{W^{1, \infty}} + \ve \|V\|_{W^{1, \infty}} ) ( \ve^\beta \|R_\zeta\|_{C^{0,1/2}} + \ve \|V_{\zeta}\|_{C^{0,1/2}}) ).
\end{align*}

We next consider the term that arises when $\partial^n$ is applied to the logarithm, and in particular $\Delta_\zeta R$ through the chain rule. Letting
\[
g(x) = \ve^\beta R(x) + \ve V(x) \quad \text{and} \quad \Delta_\zeta g(x) = \ve^\beta \Delta_\zeta R(x) + \ve \Delta_\zeta V(x),
\]
we obtain
\begin{align*}
J_{1,3}
&= 2 \iint \partial^n R(x) \Delta_\zeta R_x(x) \partial^{n-1} \left[\left(|\zeta|^2 + |\Delta_\zeta g(x)|^2\right)^{-1} \Delta_\zeta g(x) \Delta_\zeta g_x(x) \right] \diff{\zeta} \diff{x}\\
&= 2 J_{1,3,1} + \left((-1)^{n - 1} 2^n (n - 1)!\right) J_{1,3,2} + \mathscr{R},
\end{align*}
where
\begin{align*}
J_{1,3,1} &= \iint \partial^n R(x) \Delta_\zeta R_x(x) \left(|\zeta|^2 + |\Delta_\zeta g(x)|^2\right)^{-1} \Delta_\zeta g(x) \partial^n \Delta_\zeta g (x)\diff{\zeta} \diff{x},\\
J_{1,3,2} &= \iint \partial^n R(x) \Delta_\zeta R_x(x) \left(|\zeta|^2 + |\Delta_\zeta g(x)|^2\right)^{-n} \left(\Delta_\zeta g(x) \Delta_\zeta g_x(x)\right)^n\diff{\zeta} \diff{x}.
\end{align*}

We again split the integrals into regions of large and small $\zeta$. For $J_{1,3,1}$, we have
\begin{align*}
J_{1,3,1} &\leq \int \int_{|\zeta| > 1} |\partial^n R(x)| |\Delta_\zeta R_x(x)| |\Delta_\zeta g(x)| |\partial^n \Delta_\zeta g(x)| \frac{1}{|\zeta|^2} \diff{\zeta} \diff{x}\\
& \qquad + \int \int_{|\zeta| < 1} |\partial^n R(x)| \frac{|\Delta_\zeta R_x(x)|}{|\zeta|^{1/2}} \frac{ |\Delta_\zeta g(x)| }{|\zeta|} |\partial^n \Delta_\zeta g(x)| \frac{1}{|\zeta|^{1/2}} \diff{\zeta} \diff{x}\\
& \leq C \|\partial^n R\|_{L^2} \|R_x\|_{C^{0, 1 / 2}} (\ve^\beta \|R\|_{W^{1, \infty}} + \ve \|V\|_{W^{1, \infty}}) (\ve^\beta \|\partial^n R\|_{L^2} + \ve \|\partial^n V\|_{L^2}).
\end{align*}

For the term $J_{1,3,2}$, we have
\begin{align*}
J_{1,3,2} & \leq \int \int_{|\zeta|>1} |\partial^n R(x) \Delta_\zeta R_x(x)| \left|\Delta_\zeta g(x) \Delta_\zeta g_x(x)\right|^n \frac{\diff{\zeta}}{\zeta^{2 n}} \diff{x},\\
&\qquad + \int \int_{|\zeta|<1} |\partial^n R(x)| | \Delta_\zeta R_x(x)|  \left( \frac{ | \Delta_\zeta g(x) | }{|\zeta|} \right)^n \left( \frac{ | \Delta_\zeta g_x(x) | }{|\zeta|} \right)^n \diff{\zeta} \diff{x}\\
& \leq C \|\partial^n R\|_{L^2} \|\partial R\|_{L^2} (\ve^\beta \|R\|_{W^{2, \infty}} + \ve \|V\|_{W^{2, \infty}} )^{2 n}.
\end{align*}

To handle $J_{1,4}$, we apply the mean value inequality to the difference of the logarithms to obtain
\begin{align*}
\bigg| \log\bigg[1 + \frac{[\ve^\beta \Delta_\zeta R(x) + \ve \Delta_\zeta V(x)]^2}{|\zeta|^2}\bigg]- \log\bigg[1 + \frac{[ \ve \Delta_\zeta V(x)]^2}{|\zeta|^2}\bigg] \bigg| \leq \frac{\diff}{\diff{c}}\bigg\vert_{c=c_*} \bigg[ \log\bigg( 1 + \frac{c^2}{\zeta^2} \bigg)\bigg] \cdot \ve^\beta \left| \Delta_\zeta R(x) \right|,
\end{align*}
where $c_*$ is a value between $c=\ve \Delta_\zeta V(x)$ and $c=\ve^\beta \Delta_\zeta R(x) + \ve \Delta_\zeta V(x)$ that maximizes
\[
c \mapsto \frac{\diff}{\diff{c}} \log\left[ 1 + \frac{c^2}{\zeta^2} \right] = \frac{2c}{ \zeta^2 + c^2}.
\]
Using $|c| \leq |\ve^\beta \Delta_\zeta R(x)| + |\ve \Delta_\zeta V(x)|$
we find that
\begin{align*}
J_{1,4} & \leq 2 \ve \iint |\partial^n R(x)|  | \Delta_\zeta \partial^n V(x)|  \frac{|\ve^{\beta} \Delta_\zeta R(x)| + \ve |\Delta_\zeta V(x)|}{\zeta^2}   | \Delta_\zeta R(x)| \diff{\zeta} \diff{x}\\
& \leq \ve^2 \|\partial^n R\|_{L^2} \|\partial^n V\|_{L^2} \|R\|_{W^{1, \infty}}\left(\ve^{\beta - 1} \|R\|_{W^{1, \infty}} + \|V\|_{W^{1, \infty}}\right),
\end{align*}
where the last inequality follows by splitting the integration regions as usual.

In the $J_{1,5}$ term, we start by taking one derivative of the difference of logarithms
\begin{align*}
\partial \bigg\{ \log\bigg[1 + \frac{\left(\Delta_\zeta g(x)\right)^2}{|\zeta|^2}\bigg]- \log\bigg[1 + \frac{[ \ve \Delta_\zeta V(x)]^2}{|\zeta|^2}\bigg] \bigg\} = \frac{ 2 \Delta_\zeta g(x) \Delta_\zeta g_x(x) }{ \zeta^2 + \left(\Delta_\zeta g(x)\right)^2} - \frac{ 2 \ve^2 \Delta_\zeta V(x) \Delta_\zeta V_x(x) }{ \zeta^2 + \left( \ve \Delta_\zeta V(x) \right)^2 }.
\end{align*}
When we consider only the terms that are quadratic in $V(x)$, we see that
\begin{align*}
&\frac{2 \ve^2 \Delta_\zeta V(x) \Delta_\zeta V_x(x)}{ \zeta^2 + \left(\Delta_\zeta g(x)\right)^2} - \frac{ 2 \ve^2 \Delta_\zeta V(x) \Delta_\zeta V_x(x)}{ \zeta^2 + \left( \ve \Delta_\zeta V(x) \right)^2 }
\\
&\qquad=2 \ve^2 \Delta_\zeta V(x) \Delta_\zeta V_x(x) \cdot \frac{  - \ve^{2 \beta} (\Delta_\zeta R(x) )^2 - 2 \ve^{\beta + 1} \Delta_\zeta R(x) \Delta_\zeta V(x)  }{\left[ \zeta^2 + ( \ve \Delta_\zeta V(x) )^2  \right]  \left[\zeta^2 + (\Delta_\zeta g(x))^2\right] }.
\end{align*}
Using these last two equalities in $J_{1,5}$, we have
\begin{align*}
J_{1,5} &= \ve^{\beta + 1} \iint \partial^n R(x) \Delta_\zeta V(x) \partial^{n-1}  \bigg\{ \frac{ 2 \Delta_\zeta R(x) \Delta_\zeta R_x(x) }{ \zeta^2 + (\Delta_\zeta g(x))^2} \bigg\} \diff{\zeta} \diff{x}\\
&~ \quad + \ve^{2}\iint \partial^n R(x) \Delta_\zeta V(x) \partial^{n-1} \bigg\{ \frac{ 2 \Delta_\zeta R(x) \Delta_\zeta V_x(x) }{ \zeta^2 + (\Delta_\zeta g(x))^2} \bigg\} \diff{\zeta} \diff{x}\\
&~ \quad + \ve^{2}\iint \partial^n R(x) \Delta_\zeta V(x) \partial^{n-1} \bigg\{ \frac{ 2 \Delta_\zeta V(x) \Delta_\zeta R_x(x) }{ \zeta^2 + (\Delta_\zeta g(x))^2} \bigg\} \diff{\zeta} \diff{x}\\
&~ \quad - \ve^{\beta + 3}\iint \partial^n R(x) \Delta_\zeta V(x) \partial^{n-1} \bigg\{ \frac{  2  \Delta_\zeta V(x) \Delta_\zeta V_x(x)  ( \Delta_\zeta R(x) )^2 }{\left[ \zeta^2 + ( \ve \Delta_\zeta V(x) )^2  \right]  \left[\zeta^2 + (\Delta_\zeta g(x))^2\right] }  \bigg\}\\
&~ \quad - \ve^{4}\iint \partial^n R(x) \Delta_\zeta V(x) \partial^{n-1} \bigg\{ \frac{ 4 \Delta_\zeta V(x) \Delta_\zeta V_x(x) \Delta_\zeta R(x) \Delta_\zeta V(x)  }{\left[ \zeta^2 + ( \ve \Delta_\zeta V(x) )^2  \right]  \left[\zeta^2 + (\Delta_\zeta g(x))^2\right] }  \bigg\}.
\end{align*}
Each of these terms can be handled similarly to $J_{1,3}$, and the resulting estimate is
\begin{align*}
J_{1, 5} & \leq C \ve^{\beta + 1} \|\partial^n R\|_{L^2}^2 \|R\|_{W^{1, \infty}} + C \ve^2 \|\partial^n R\|_{L^2} \|\partial^{n - 1} R\|_{L^2} + C \ve^2 \|\partial^n R\|_{L^2}^2\\
& \qquad + C \|\partial^n R\|_{L^2} (\ve^\beta \|R\|_{W^{2, \infty}} + \ve \|V\|_{W^{2, \infty}} )^2 + C \|\partial^n R\|_{L^2} (\ve^\beta \|R\|_{W^{2, \infty}} + \ve \|V\|_{W^{2, \infty}} )^{2 n}\\
& \qquad + C \|\partial^n R\|_{L^2} (\ve^\beta \|R\|_{W^{2, \infty}} + \ve \|V\|_{W^{2, \infty}} ) + C \|\partial^n R\|_{L^2} (\ve^\beta \|R\|_{W^{2, \infty}} + \ve \|V\|_{W^{2, \infty}} )^n\\
& \leq C \ve^2 E (1 + \ve^{\beta - 1} E^{1 / 2} + \ve^{2 \beta - 2} E^{1 / 2} + \ve^{2 \beta - 2} E + \ve^{\beta n  - 2} E^{(n - 1) / 2} + \ve^{2 \beta n  - 2} E^n).
\end{align*}

Putting these estimates together and making use of the bootstrap assumption \eqref{btsp}, we obtain
\[
J_1 \leq C \ve^2 E (1 + \ve^{\beta - 1} E^{1 / 2} + \ve^{2 \beta - 2} E^{1 / 2} + \ve^{2 \beta - 2} E + \ve^{\beta n  - 2} E^{(n - 1) / 2} + \ve^{2 \beta n  - 2} E^n) \leq C \ve^2 E.
\]

\subsubsection{Modified energy terms} The higher-order terms that involve the modified energy correction are $J_2$--$J_6$.
We begin with the term $J_2$. Using the skew-adjointness of the Hilbert transform and considering the term with the most derivatives
on $R$, we have
\begin{align*}
J_2 &= - \ve \rho \int \partial^{n+1} \left( \hilbert [\noncub[\ve V]]  \hilbert[R] \right) \partial^n \hilbert[R] \diff{x}\\
&= - \ve \rho \int  \hilbert [\noncub[\ve V]]  \partial^{n+1} \hilbert[R] \ \partial^n \hilbert[R] \diff{x} + \mathscr{R} \\
&=  - \frac{\ve}{2}\rho \int  \partial \noncub[\ve V]  \hilbert \left[\left( \partial^n \hilbert[R] \right)^2\right] \diff{x} + \mathscr{R}.
\end{align*}
We can obtain a pointwise bound for
\begin{align*}
\partial \noncub[\ve V](x) &= \frac{\ve}{2 \pi} \partial \int \Delta_\zeta V_x(x) \log\bigg[ 1 + \frac{ [\ve\Delta_\zeta V(x)]^2 }{\zeta^2 } \bigg] \diff{\zeta}\\
&= \frac{\ve}{2 \pi} \int \Delta_\zeta \partial V_x(x) \log\bigg[ 1 + \frac{ [\ve\Delta_\zeta V(x)]^2 }{\zeta^2 } \bigg] \diff{\zeta} + \frac{\ve}{2 \pi} \int \Delta_\zeta V_x(x) \frac{2 \ve^2  \Delta_\zeta V(x)\Delta_\zeta V_x (x)  }{ \zeta^2 + [\ve\Delta_\zeta V(x)]^2  } \diff{\zeta}\\
& \leq C \ve^3 \int_{|\zeta| \geq 1} \Delta_\zeta \partial V_x(x) [\Delta_\zeta V(x)]^2 \frac{\diff{\zeta}}{\zeta^2} + C \ve^3 \int_{|\zeta| < 1} \Delta_\zeta \partial V_x(x) [V_x(x)]^2 \diff{\zeta}\\
& \qquad + C \ve^3 \int_{|\zeta| \geq 1} \Delta_\zeta V_x(x) \Delta_\zeta V(x)\Delta_\zeta V_x (x) \frac{\diff{\zeta}}{\zeta^2}\\
& \leq C \ve^3,
\end{align*}
where the last inequality follows from the fact that $V \in H^{n_v - 2}(\R)$ and $C > 0$ depends on $V$.
This implies that
\begin{align*}
J_2 \leq C \ve^2 (1 + E).
\end{align*}

We next consider $J_3 + J_4$, which is given by
\begin{align*}
J_3+J_4
&= \ve^{1 - \beta} \rho \bigg[ \int \hilbert[V] \partial^n \hilbert\left[ \noncub[\ve^\beta R + \ve V] - \noncub[\ve V] \right] \partial^{n+1} \hilbert[R] \diff{x} \\
&~  \quad - \int  \hilbert[V] \partial^{n+1} \hilbert[R] \partial^n \hilbert\left[ \noncub[\ve^\beta R + \ve V] - \noncub[\ve V] \right]  \diff{x}  \\
&~ \quad - (n+1)  \int  \hilbert[\partial V] \partial^n \hilbert[R] \partial^n \hilbert\left[ \noncub[\ve^\beta R + \ve V] - \noncub[\ve V] \right]  \diff{x} \bigg] + \mathscr{R} \\
&= - (n + 1) \ve^{1 - \beta} \rho \int  \hilbert[\partial V] \partial^n \hilbert[R] \partial^n \hilbert\left[ \noncub[\ve^\beta R + \ve V] - \noncub[\ve V] \right]  \diff{x} + \mathscr{R}.
\end{align*}

The remaining term on the right-hand side of this equation can be estimated in a similar way to before. After using commutators to cancel the Hilbert transforms, we get that
\begin{align*}
&~ \ve^{1 - \beta} \int  \hilbert[\partial V] \partial^n \hilbert[R] \partial^n \hilbert\left[ \noncub[\ve^\beta R + \ve V] - \noncub[\ve V] \right]  \diff{x} \\
&=  \ve^{1 - \beta}  \bigg[\int  \hilbert[\partial V] \partial^n R \partial^n \left( \noncub[\ve^\beta R + \ve V] - \noncub[\ve V] \right)  \diff{x}\\
& \hspace{1in} - \int   \partial^n \hilbert[R] \left[\hilbert, \hilbert[\partial V]\right] \left[\partial^n \left( \noncub[\ve^\beta R + \ve V] - \noncub[\ve V] \right) \right]  \diff{x} \bigg].
\end{align*}
The first term is estimated in a similar way to $J_1$; the presence of the factor $\hilbert[\partial V]$ does not change the method. From Lemma~\ref{lem:comm}, the second commutator term satisfies the estimate
\begin{align*}
&~ \ve^{1 - \beta} \int   \partial^n \hilbert[R] \left[\hilbert, \hilbert[\partial V]\right] \left[\partial^n \left( \noncub[\ve^\beta R + \ve V] - \noncub[\ve V] \right) \right]  \diff{x}  \\
&\leq C \ve^{1 - \beta} \left\| \partial^n R\right\|_{L^2} \left\| \partial^{n + 1} \hilbert[V]\right\|_{L^\infty} \left\|\noncub[\ve^\beta R + \ve V] - \noncub[\ve V] \right\|_{L^2}.
\end{align*}
The important quantity to estimate here is the $L^2$ norm of the difference of the nonlinearities
\begin{align*}
&~ \left\| \noncub[\ve^\beta R + \ve V] - \noncub[\ve V] ] \right\|_{L^2}^2 \\
&= \frac{1}{4 \pi^2} \int \bigg\{ \int \ve^\beta \Delta_\zeta R_x(x)  \log \bigg( 1 + \frac{ (\Delta_\zeta g(x))^2}{\zeta^2} \bigg) \diff{\zeta}\\
& \hspace{1in} + \int \ve \Delta_\zeta V_x(x) \bigg[  \log \bigg( 1 + \frac{ (\Delta_\zeta g(x))^2}{\zeta^2} \bigg) \diff{\zeta} -  \log \bigg( 1 + \frac{ (\ve\Delta_\zeta V(x))^2}{\zeta^2} \bigg) \bigg] \diff{\zeta} \bigg\}^2 \diff{x}\\
&\leq C \int   \bigg\{  \int \ve^\beta \left| \Delta_\zeta R_x(x) \right|  \bigg| \log \bigg( 1 + \frac{ (\Delta_\zeta g(x))^2}{\zeta^2} \bigg) \bigg| \diff{\zeta} \\
& \hspace{1in}  + \int \ve \left| \Delta_\zeta V_x(x) \right| \bigg|  \log \bigg( 1 + \frac{ (\Delta_\zeta g(x))^2}{\zeta^2} \bigg) \diff{\zeta} -  \log \bigg( 1 + \frac{ (\ve\Delta_\zeta V(x))^2}{\zeta^2} \bigg) \bigg| \diff{\zeta} \bigg\}^2 \diff{x}\\
&\leq C \int \left[ \ve^{\beta + 1} |\Delta_\zeta R_x(x)| + \ve^{\beta + 1} | \Delta_\zeta V_x(x) | \right]^2 \diff{x},
\end{align*}
where, to obtain the last line, we use the mean value theorem on the logarithm and split the $\zeta$-integral into $|\zeta| \geq 1$ and $|\zeta| < 1$ as before.

Using the Leibniz rule and the bootstrap assumption \eqref{btsp}, we can also show that for $m \in \N \cup \{0\}$
\begin{align}
\label{leibniz}
\left\|\noncub[\ve^\beta R + \ve V] - \noncub[\ve V] ] \right\|_{H^m} \leq C \ve^{\beta + 1} \|R\|_{H^{m + 1}} + C \ve^{\beta + 1}.
\end{align}
It then follows that
\begin{align*}
J_3 + J_4 \leq C\ve^2 (1 + E).
\end{align*}

Finally, we consider $J_5 + J_6$
\begin{align*}
J_5 + J_6 & = \rho \int \partial^n \hilbert[R] \hilbert\left[\noncub[\ve^\beta R + \ve V] - \noncub[\ve V]\right]\partial^{n+1} \hilbert[R] \diff{x} \\
&~ \quad + \rho \int \hilbert[R] \partial^n \hilbert\left[\noncub[\ve^\beta R + \ve V] - \noncub[\ve V] \right]\partial^{n+1} \hilbert[R] \diff{x}\\
&~ \quad - \rho \int \hilbert[R] \partial^{n+1} \hilbert[R] \partial^n \hilbert\left[ \noncub[\ve^\beta R + \ve V] - \noncub[\ve V] \right]  \diff{x}\\
&~ \quad - (n + 1) \rho \int \partial \hilbert[R] \partial^n \hilbert[R] \partial^n \hilbert\left[ \noncub[\ve^\beta R + \ve V] - \noncub[\ve V] \right]  \diff{x} + \mathscr{R}\\
& = - \frac{\rho}{2} \int |\partial^n \hilbert[R]|^2 \partial \hilbert\left[\noncub[\ve^\beta R + \ve V] - \noncub[\ve V]\right] \diff{x} \\
&~ \quad - (n + 1) \rho \int \partial \hilbert[R] \partial^n \hilbert[R] \partial^n \hilbert\left[ \noncub[\ve^\beta R + \ve V] - \noncub[\ve V] \right]  \diff{x} + \mathscr{R}.
\end{align*}

For the first term on the right-hand side, we can use H\"{o}lder's inequality, Sobolev embedding, estimates \eqref{leibniz}, and the bootstrap assumption \eqref{btsp} to show that
\begin{align*}
& \bigg|\int |\partial^n \hilbert[R]|^2 \partial \hilbert\left[\noncub[\ve^\beta R + \ve V] - \noncub[\ve V]\right] \diff{x}\bigg|\\
\leq ~& C \|R\|_{H^n}^2 \bigg\|\partial \hilbert\left[\noncub[\ve^\beta R + \ve V] - \noncub[\ve V]\right]\bigg\|_{L^\infty}\\
\leq ~& C \ve^{\beta + 1} E^{3 / 2} \leq C \ve^2 E.
\end{align*}

For the remaining term, we use similar commutator estimates to the ones for $J_3 + J_4$, but distribute derivatives differently, to get
\begin{align*}
& \int \partial \hilbert[R] \partial^n \hilbert[R] \partial^n \hilbert[ \noncub[\ve^\beta R + \ve V] - \noncub[\ve V] ]  \diff{x} \\
\leq ~& C \|\partial^n R\|_{L^2} \|\partial^{n - 1} \hilbert[R]\|_{L^\infty} \left\|\noncub[\ve^\beta R + \ve V] - \noncub[\ve V] \right\|_{H^2}\\
\leq ~& C \beta^{\beta + 1} \|R\|_{H^n}^2 \|R\|_{H^3} \leq C \beta^{\beta + 1} E^{3 / 2} \leq C \ve^2 E,
\end{align*}
where in the last line follows from Sobolev embeddings, estimates \eqref{leibniz}, and the bootstrap assumption \eqref{btsp}.

\subsection{Energy estimates and enhanced lifespan for $R$}

Using the estimates for $I_1$--$I_{13}$ and $J_1$--$J_6$ in \eqref{dtE1}, we find, under the bootstrap assumption \eqref{btsp}, that
\[
\frac{\diff{E}}{\diff{t}} \leq C \ve^2 (1 + E^{1 / 2} + E) \leq C \ve^2 (1+E).
\]
We then get from Gronwall's inequality that
\begin{align}
\sup_{t \in [0, T / \ve^2]} E(t) &\leq C \left(E(0) + T\right) e^{C T},
\label{eng_gron}
\end{align}
so the bootstrap \eqref{btsp} is closed if $\ve>0$ is sufficiently small that
\[
C \left(E(0) + T\right) e^{C T} \leq \frac{1}{\ve^2}.
\]
Since $\|R(\cdot,0)\|_{H^n}$, and therefore $E(0)$, are bounded independently of $\ve$ for all sufficiently small $\ve$, the energy estimate \eqref{engest} follows from \eqref{eng_gron}.

These energy estimates assume additional smoothness on $R$. However, by continuous dependence of the Cauchy problem for \eqref{cde'}, we can approximate $\vp$ by smooth solutions $\vp^\nu$, carry out the energy estimates on $R^\nu = \ve^{-\beta}(\vp^\nu - \ve V)$, and take the limit as $\vp^\nu \to \vp$ in $C_t H^n_x$.
An alternative argument would be to use the \emph{a priori} estimates for $R$ to directly construct solutions of the error equation \eqref{err}.

We then see  from the boundedness of $\|R\|_{H^n}$ and $\|V\|_{H^n}$ that $\|\vp\|_{H^n}$ remains bounded, so it follows from local existence and uniqueness for \eqref{cde'} that
we can extend $\vp$ to a solution $\vp \in C([0, T / \ve^2]; H^n(\R))$ with $\vp(\cdot, 0) = \vp_0$. Moreover, the estimates \eqref{engest1} and \eqref{diff'} imply that this solution satisfies \eqref{solest}, which completes the proof of Theorem~\ref{thm:euler-cub}.

\appendix

\section{Contour dynamics for Euler vorticity fronts}
\label{sec:contour}

In this appendix, we will use contour dynamics to derive equation \eqref{cde} for $\vp(x,t)$, following the methods
used in \cite{HSZ19pb,HSZ20} for SQG and GSQG fronts.

The streamfunction-vorticity formulation for the velocity $\u(\x,t) = \left(u(x,y,t),v(x,y,t)\right)$ with $\x=(x,y)$ in the two-dimensional incompressible Euler equations is \cite{MB02}
\[
\alpha_t + \u\cdot\nabla \alpha = 0,\qquad
\u = \nabla^\perp \psi,\quad  -\Delta \psi = \alpha,\qquad \nabla^\perp = (-\partial_y,\partial_x),
\]
where $\psi(\x,t)$ is the streamfunction and it is convenient to use the negative vorticity $\alpha(\x,t)$.

For Euler front solutions with piecewise constant vorticities $-\alpha_+ \ne -\alpha_-$ that jump across $y=\vp(x,t)$ and approach linear shear
flows as $y\to \pm \infty$, we have
\begin{align}
\label{euler_front}
\alpha(\x,t) &= \begin{cases}\alpha_+ & \text{if $\ y > \vp(x,t)$},\\ \alpha_- & \text{if $y < \vp(x,t)$},\end{cases}
\qquad\u(\x,t) = \left(\alpha_\pm y,0\right) + o(1)\quad \text{as $y\to \pm \infty$}.
\end{align}
We will assume that $\vp$ satisfies the following conditions on a time interval $0\le t \le T$ with $T>0$:
\begin{align}\begin{split}
&\text{(i) $\varphi(\cdot,t) \in C^{1,\gamma}(\R)$ for some $\gamma > 0$;}\\
&\text{(ii) $\varphi_x(x,t) = O(|x|^{-(1+\delta)})$ as $|x|\to \infty$ for some $\delta > 0$;}\\
&\text{(iii) $\lim_{|x|\to \infty} \vp(x,t) = c$.}
\end{split}\label{asphi}\end{align}
In that case, the integrals below converge.

For any $h \in \R$, we denote the negative vorticity and velocity of a planar shear flow for a vorticity front located at $y=h$ by
\begin{align}
\label{defutilde1}
\begin{split}
&\tilde{\alpha}_h(y) = \begin{cases}\alpha_+ & \text{if $\ y > h$},\\ \alpha_- & \text{if $y < h$},\end{cases}
\qquad \tilde{\u}_h(y) = \left(\tilde{u}_h(y), 0 \right),
\\
&\tilde{u}_h(y) = \frac{1}{2}\Xi y + \frac{1}{2}\Theta |y - h|,
\\
&\Xi = \alpha_+ + \alpha_-,\qquad
\Theta = \alpha_+ - \alpha_-.
\end{split}
\end{align}
We then decompose the front solution \eqref{euler_front} as the sum of a shear flow and a perturbation
\begin{align}
\begin{split}
\alpha(\x, t) &= \tilde{\alpha}_h(\x) + \alpha^*_h(\x, t),
\qquad
\u(\x, t) = \tilde{\u}_h(y) + \u^*_h(\x, t),
\\
\alpha^*_h(\x,t) &= \begin{cases} - \Theta &\text{if $h < y < \vp(x, t)$},\\
\Theta & \text{if $h > y > \vp(x, t)$},\\
0 &\text{otherwise}.\end{cases}
\end{split}
\label{udecom}
\end{align}
with
\begin{align}
\label{u*h}
\u^*_h = \nabla^\perp \psi^*_h, \qquad   -\Delta\psi^*_h  = \alpha^*_h.
\end{align}

We will use the following orientations for the unit tangent vectors on the front and the line $y=h$:
\begin{align}
\label{deft}
\t(\x, t) = \frac{(1, \vp_{x}(x, t))}{\sqrt{1 + \vp_{x}^2(x, t)}}\quad \text{on $y = \vp(x, t)$},\qquad
 \t(\x, t) = (-1, 0) \quad \text{ on $y = h$}.
\end{align}

In the next two sections, we use two different choices of the parameter $h$ to derive \eqref{cde}.
In the first section, we take $h=c$ to be the limiting displacement of the front in \eqref{asphi},
which has the advantage that the standard potential representation for $\u^*_c$ converges under mild additional assumptions on $\vp$,
but the disadvantage that the line $y=c$ may intersect the front $y=\vp(x,t)$. In the second section, we choose
$h < \inf_{x\in \R} \vp(x,t)$, which has the advantage that the line $y=h$ does not intersect the front $y=\vp(x,t)$,
but the disadvantage that we have to modify the standard potential representation to get a convergent integral for $\u^*_h$.

\subsection{Contour dynamics equation I}

We make the choice $h = c$ in \eqref{udecom}, where $c$ is the far-field limit of the function $\vp$ given in \eqref{asphi}.
Since $\vp(\cdot,t)$ is continuous, the set $\{x\in \R: \vp(x, t) \ne c\}$ is open, and, by the structure of open sets in $\R$, it is the disjoint union of countably many open intervals. We denote these open intervals by $I_n=(a_n, b_n)$, with $-\infty \leq a_n<b_n\leq a_{n+1}<b_{n+1} \leq \infty$, $n\in \Z$. Then the set $\Omega^*(t) = \supp \alpha^*_c(\cdot, t)$ can be written as
\[
\Omega^*(t) = \bigcup_{n\in\Z}~\Omega^*_n(t),
\]
where each $\Omega^*_n(t)$ has one of the forms
\begin{align*}
&\{(x, y)\in \R^2 :  \text{$x \in I_n$, $c < y < \vp(x, t)$}\}\qquad \text{with $\alpha_c^*(\x, t) = -\Theta$},
\\
&\{(x, y)\in \R^2 :  \text{$x \in I_n$, $c > y > \vp(x, t)$}\}\qquad \text{with $\alpha_c^*(\x, t) = \Theta$}.
\end{align*}

Using the Biot-Savart law, we can express the velocity perturbation as
\begin{align*}
\u_c^*(\x,t) &=   \frac1{2 \pi}\iint_{\Omega^*(t)} \frac{(\x - \x')^\perp}{|\x - \x'|^2} \alpha^*_c(\x', t)\diff{\x'}
= \frac1{2\pi}\sum_{n\in \Z} \iint_{\Omega^*_n(t)} \frac{(\x - \x')^\perp}{|\x - \x'|^2} \alpha^*_c(\x', t) \diff{\x'},
\end{align*}
where $(x, y)^\perp = (-y, x)$. For each $\Omega^*_n(t)$, we apply Green's theorem to get
\[
 \iint_{\Omega^*_n(t)} \frac{(\x - \x')^\perp}{|\x - \x'|^2} \alpha_c^*(\x', t) \diff{\x'} =
\Theta \int_{\partial \Omega^*_n(t)} \t(\x', t) \log|\x - \x'| \diff{s(\x')},
\]
where the tangent vector $\t$ is defined as in \eqref{deft}.
We then find that
\begin{align*}
 &\iint_{\Omega^*_n(t)} \frac{(\x - \x')^\perp}{|\x - \x'|^2} \alpha_c^*(\x', t) \diff{\x'}\\
 &= \Theta \int_{I_n} (1,\vp_{x'}(x',t)) \log\left|\sqrt{(x-x')^2 + (\vp(x, t) - \vp(x', t))^2}\right| - (1, 0) \log\left|\sqrt{(x - x')^2+(\vp(x,t) - c)^2}\right| \diff{x'}.
\end{align*}
For unbounded components, a limiting procedure as in \cite{HSZ19pb} can be used, under a mild additional decay condition
that  $[\vp(x',t)-c]/x'$ is integrable for large $|x'|$, but we omit the details here. Summing these contributions, we get that
\begin{align}
\begin{split}
&\u_c^*(\x,t)
\\
&=\frac{\Theta}{2\pi} \int_\R (1,\vp_{x'}(x',t)) \log\left|\sqrt{(x-x')^2 + (\vp(x, t) - \vp(x', t))^2}\right| - (1, 0) \log\left|\sqrt{(x - x')^2+(\vp(x,t) - c)^2}\right| \diff{x'}.
\end{split}
\label{uc*}
\end{align}

Let $\x = (x,\varphi(x, t))$ be a point on the front and denote by
\begin{align}
\label{defn}
\n(\x,t) = \frac{1}{\sqrt{1+\varphi_x^2(x,t)}}(-\varphi_x(x,t),1)
\end{align}
the unit upward normal to the front. The front $y = \vp(x, t)$ moves with the upward normal velocity $\u \cdot \n$, namely $(0, \vp_t) \cdot \n = \u \cdot \n$,  so using \eqref{udecom}, we obtain that
\begin{align}
\label{cont0}
\vp_t(x, t) =\sqrt{1 + \vp_x^2(x, t)}\tilde{\u}_c(\x)\cdot \n(\x, t)+\sqrt{1 + \vp_x^2(x, t)}\u^*_c(\x, t)\cdot \n(\x, t).
\end{align}

From \eqref{uc*} and \eqref{defn}, we have
\begin{align*}
&\sqrt{1 + \vp_x^2(x, t)}\u^*_c(\x,t)\cdot\n(\x,t)\\
&\qquad=-\frac{\Theta}{4\pi}\vp_x(x, t)\int_{\R} \left\{\log\left[1 + \bigg(\frac{\vp(x, t) - \vp(x', t)}{x - x'}\bigg)^2\right] - \log\left[1 + \bigg(\frac{\vp(x, t) - c}{x - x'}\bigg)^2\right]\right\}\diff{x'}\\
&\qquad+\frac{\Theta}{4 \pi} \int_{\R} \vp_{x'}(x', t) \log\left[(x - x')^2 + (\vp(x, t) - \vp(x', t))^2\right]\diff{x'}\\
&\qquad = -\frac{\Theta}{4 \pi} \vp_x(x, t) \int_{\R} \log \left[1 + \bigg(\frac{\vp(x, t) - \vp(x', t)}{x - x'}\bigg)^2\right] \diff{x'} + \frac{\Theta}{2} \vp_x(x, t) |\vp(x, t) - c|\\
&\qquad+\frac{\Theta}{4 \pi} \int_{\R} \vp_{x'}(x', t) \log\left[(x - x')^2 + (\vp(x, t) - \vp(x', t))^2\right] \diff{x'},
\end{align*}
where we have used the identity
\[
\int_{\R} \log\bigg(1 + \frac{a^2}{x^2}\bigg) \diff{x}
= 2 \pi |a|.
\]
We observe that the above integrals converge thanks to \eqref{asphi}.

Moreover, from \eqref{defutilde1} and \eqref{defn}, we have
\begin{align*}
\sqrt{1 + \varphi_x^2(x, t)}\tilde{\u}_c(\x) \cdot \n(\x, t) & =
-\frac{\Xi}{2} \vp(x, t)\vp_x(x, t) - \frac{\Theta}{2}\vp_x(x, t) |\vp(x, t) - c|.
\end{align*}
Using these expressions in \eqref{cont0}, we get
\begin{align*}
\vp_t(x, t) = -\frac{\Xi}{4} \left(\vp^2(x, t)\right)_x - \frac{\Theta}{4 \pi} \vp_x(x, t) \int_{\R} \log\left[1+\bigg(\frac{\vp(x, t) - \vp(x', t)}{x - x'}\bigg)^2\right]\diff{x'} \\
+ \frac{\Theta}{4 \pi} \int_{\R} \vp_{x'}(x', t) \log\left[(x-x')^2+(\vp(x,t)-\vp(x',t))^2\right]\diff x'.
\end{align*}
Then, using the identity
\begin{align}
\label{lin_hilb}
\int_{\R} \vp_{x'}(x', t) \log(x - x')^2 \diff{x'} =  2\,\pv \int_{\R} \frac{\vp(x', t)-c}{x - x'} \diff{x'} = 2\pi\hilbert[\vp-c](x, t),
\end{align}
and making the substitution $x' = x + \zeta$, we find that $\vp$ satisfies
\begin{align}
\label{dim_cde}
\begin{split}
&\vp_t(x, t) + \frac{\Xi}{4} \px \left[\vp^2(x, t)\right] + \frac{\Theta}{4 \pi} \int_\R \left[\vp_x(x, t) - \vp_x(x + \zeta, t)\right] \log\bigg[1 + \frac{[\vp(x, t) - \vp(x + \zeta, t)]^2}{|\zeta|^2}\bigg] \diff{\zeta}
\\
&\qquad = \frac{\Theta}{2}\hilbert[\vp-c](x, t).
\end{split}
\end{align}
Nondimensionalizing the time variable by $t\mapsto \Theta t/2$ and setting $c=0$ without loss of generality, we obtain \eqref{cde}
with $m = \Xi/\Theta$. Equation \eqref{dim_cde} agrees with previous results in \cite{HS18} for the cubic front equation in the symmetric case with $\Xi=0$.

We remark that the corresponding dimensional version of the Burgers-Hilbert equation \eqref{bh} is
\begin{align*}
& u_t + \left(\frac{\sqrt{\Xi^2 + \Theta^2}}{4} u^2\right)_x = \frac{\Theta }{2}\hilbert[u].
\end{align*}

\subsection{Contour dynamics equation II}
We choose $h \in \R$ such that
\[
h < \inf \{\vp(x, t) : (x, t) \in \R \times [0, T]\}.
\]
The resulting $\alpha^*_{h}$ is then
\begin{align*}
\alpha^*_{h}(\x,t) = \begin{cases} -\Theta &\text{if $h < y < \vp(x,t)$}\\
 0 &\text{otherwise}\end{cases}
 \end{align*}
We denote the support of $\alpha^*_{h}(\cdot, t)$ by
\[
\Omega^*(t) = \left\{(x,y) \in \R^2 : h < y < \vp(x,t)\right\}.
\]

This choice of $h$ guarantees that the front $y = \vp(x, t)$ does not intersect with the artificial front $y = h$. However, the velocity integral using the usual Biot-Savart law does not converge. We therefore modify the Biot-Savart law by using
a potential that vanishes at a fixed point $\x_0 = (x_0,y_0)$, which can be chosen outside $\Omega_*$ for convenience,  rather than at infinity. A class of solutions of \eqref{u*h} for $\u^*_{h}$ then has the Green's function representation
\begin{equation}
\label{defu*}
\u^*_{h}(\x, t) =  \frac{\Theta}{2\pi} \int_{\Omega^*(t)}
\left\{\frac{(\x - \x')^\perp}{|\x - \x'|^2} - \frac{(\x_0 - \x')^\perp}{|\x_0 - \x'|^2}\right\} \diff{\x'}
+ \bar{\u}(t),
\end{equation}
where $\bar{\u}(t)$ is an arbitrary spatially uniform velocity.  We will choose $\bar{\u}(t)$
so that $\u(\x,t)$ has the asymptotic behavior in \eqref{euler_front} as $|y| \to \infty$.
The integral in \eqref{defu*} converges absolutely, since, if $\x' = (x', y')$, then the integrand is $O\left(|x'|^{-2}\right)$ as $|x'|\to \infty$ and compactly supported in $y'$.

We remark that the corresponding integral representation of $\u^*_{h}$ using the generalized Biot-Savart law in the SQG and GSQG equations converges absolutely, so it is not necessary to modify the standard generalized Biot-Savart kernel in that case \cite{HSZ19pb, HSZ20}.

Writing
\[
\frac{(\x - \x')^\perp}{|\x - \x'|^2} - \frac{(\x_0 - \x')^\perp}{|\x_0 - \x'|^2}
= -\nabla_{\x'}^\perp\left\{\log|\x-\x'| - \log|\x_0-\x'|\right\},
\]
applying Green's theorem in \eqref{u*h} on a truncated region with $|x-x'| < \lambda$ (as in \cite{HSZ19pb}), and taking the limit $\lambda\to\infty$, we get that
\begin{align}
\label{u*}
\u^*_{h}(\x,t) = \frac{\Theta}{2\pi}\int_{\partial \Omega^*(t)} \t(\x',t)\left\{\log|\x-\x'| - \log|\x_0-\x'|\right\}\diff{s(\x')}
+ \bar{\u}(t),
\end{align}
where $\t$ is  the negatively oriented unit tangent vector on $\partial \Omega^*$ defined as in \eqref{deft}.

If $\bar{\u} = (\bar{u},\bar{v})$,  then the component form of \eqref{u*} is
\begin{align*}
\begin{split}
\u^*_{h}(\x,t) &= \left(u^*(x,y,t),  v^*(x,y,t)\right)
\\
u^*_{h}(x,y,t) &=\frac{\Theta}{4\pi} \int_{\R} \bigg\{\log\bigg[\frac{(x - x')^2 + (y - \vp(x', t))^2}{(x-x')^2 + (y -h)^2}\bigg] - \log\bigg[\frac{(x_0-x')^2 + (y_0-\vp(x', t))^2}{(x_0-x')^2 + (y_0 -h)^2}\bigg] \bigg\} \diff{x'} + \bar{u}(t),
\\
v^*_{h}(x.y,t) &= \frac{\Theta}{4\pi}\int_{\R} \log\bigg[\frac{(x - x')^2 + (y - \vp(x', t))^2}{(x_0 - x')^2 + (y_0 - \vp(x', t))^2}\bigg] \vp_{x'}(x', t)\diff{x'}
+ \bar{v}(t).
\end{split}
\end{align*}
The integral for $u^*_{h}$ converges since the integrand is $O(|x'|^{-2})$ as $|x'|\to \infty$, while the integral for $v^*_{h}$ converges since $\vp_{x'}(x',t) = O(|x'|^{-(1+\delta)})$ as $|x'|\to \infty$.

Since $\vp(x,t) \to c$ as $|x|\to\infty$, we have as $|y|\to \infty$ that
\begin{align*}
\int_{\R} \log\bigg[\frac{(x - x')^2 + (y - \vp(x', t))^2}{(x - x')^2 + (y -h)^2}\bigg] \diff{x'}
&= |y|\int_{\R} \log\bigg[\frac{\eta^2 + (1-\vp(x+y\eta,t)/y)^2}{\eta^2 + (1 -h/y)^2}\bigg] \diff{\eta}
\\
&= |y| \int_{\R} \log\bigg[\frac{\eta^2 + (1-c/y)^2}{\eta^2 + (1 -h/y)^2}\bigg] \diff{\eta} + o(1)
\\
&=- 2(c+h)\sgn y \int_{\R } \frac{1}{1+\eta^2} \diff{\eta} + o(1)
\\
&= - 2\pi (c+h)\sgn y + o(1).
\end{align*}
We also have that
\begin{align*}
& \int_{\R} \log\left[(x-x')^2 + (y - \vp(x', t))^2\right] \vp_{x'}(x', t) \diff{x'}\\
=~& \log|y| \int_{\R}  \vp_{x'}(x', t) \diff{x'}
+ \int_{\R} \log\bigg[\bigg(\frac{x-x'}{y}\bigg)^2 + \left(1-\frac{\vp(x', t)}{y}\right)^2\bigg] \vp_{x'}(x', t) \diff{x'}
\\
= ~& \log|y| \int_{\R}  \vp_{x'}(x', t) \diff{x'} + o(1)\qquad \text{as $|y|\to \infty$},
\end{align*}
so the $y$-component of the velocity approaches zero if  $\int \vp_{x'}(x', t) \diff{x'} = 0$, which is the case if
$\vp$ satisfies \eqref{asphi}.

Under the assumptions in \eqref{asphi}, it follows that the velocity perturbations have the asymptotic behavior as $|y|\to\infty$
\begin{align*}
u^*_{h}(x,y,t) &= -\frac{1}{2} \Theta (c+h)\sgn y + \bar{u}(t)-u_\infty(t) + o(1),
\\
v^*_{h}(x,y,t) &=  \bar{v}(t) - v_\infty(t) + o(1) ,
\\
{u}_\infty(t) &= \frac{\Theta}{4\pi} \int_{\R} \log\bigg[\frac{(x_0 - x')^2 + (y_0 - \vp(x', t))^2}{(x_0 - x')^2 + (y_0 -h)^2}\bigg] \diff{x'},
\\
{v}_\infty(t) &= \frac{\Theta}{4\pi}\int_{\R} \log\left[(x_0 - x')^2 + (y_0 - \vp(x', t))^2\right] \vp_{x'}(x', t) \diff{x'}.
\end{align*}

We choose $\bar{\u} = ({u}_\infty,{v}_\infty)$ in \eqref{u*}, in which case, using the integral
\begin{equation}
\int_{\R} \log\bigg[\frac{x^2 + a^2}{x^2+b^2}\bigg] \diff{x} = 2\pi\left(|a| - |b|\right)
\label{log_int}
\end{equation}
to replace $h$ by $c$ in $u$,
we find that the full velocity field $\u = (u, v)$ can be written as
\begin{align*}
u(x, y, t) &=  \frac{\Theta}{4\pi} \int_{\R} \log\bigg[\frac{(x - x')^2 + (y - \vp(x', t))^2}{(x - x')^2 + (y -c)^2}\bigg] \diff{x'}
+ \frac{1}{2} \Xi y + \frac{1}{2} \Theta |y -c|,
\\
v(x, y, t) &=  \frac{\Theta}{4 \pi}
\int_{\R} \log\bigg[(x - x')^2 + (y - \vp(x', t))^2\bigg] \vp_{x'}(x', t) \diff{x'}.
\end{align*}
This velocity has the far-field behavior in \eqref{euler_front} as $|y| \to \infty$.

If $\x = \left(x, \varphi(x, t)\right)$ is a point on the front and $\vp = \vp(x, t)$, then
\begin{align*}
u(x,\vp,t) &=  \frac{\Theta}{4\pi}\int_{\R} \log\bigg[\frac{(x - x')^2 + (\vp(x, t) - \vp(x', t))^2}{(x - x')^2 + (\vp(x, t) - c)^2}\bigg] \diff{x'}
+ \frac{1}{2} \Xi \vp + \frac{1}{2} \Theta |\vp(x, t) - c|,
\\
v(x,\vp,t) &=  \frac{\Theta}{4 \pi}
\int_{\R} \log\bigg[(x - x')^2 + (\vp(x, t) - \vp(x', t))^2\bigg] \vp_{x'}(x', t) \diff{x'}.
\end{align*}
Imposing the condition that the front $y = \vp(x, t)$ moves with the upward normal velocity $\u \cdot \n$,  namely $(0, \vp_t) \cdot \n = \u \cdot \n$, we get that
\begin{equation}
\vp_t + \frac{1}{2}\left(\Xi \vp + \Theta |\vp-c| + \Theta F_1\right) \vp_x  = \frac{1}{2}\Theta F_2,
\label{temp_vp}
\end{equation}
where
\begin{align*}
F_1(x, t) &= \frac{1}{2\pi}\int_{\R} \log\bigg[\frac{(x - x')^2 + (\vp(x, t) - \vp(x', t))^2}{(x - x')^2 + (\vp(x, t) - c)^2}\bigg] \diff{x'},\\
 F_2(x,t) &= \frac{1}{2 \pi} \int_{\R} \vp_{x'}(x', t) \log\bigg[(x - x')^2 + (\vp(x, t) - \vp(x', t))^2\bigg]\diff{x'}.
\end{align*}

Using \eqref{lin_hilb} and \eqref{log_int}, we can write
\begin{align*}
F_1(x, t)
&= \frac{1}{2\pi} \int_{\R} \log\bigg[\frac{(x - x')^2 + (\vp(x, t) - \vp(x', t))^2}{(x - x')^2}\bigg] \diff{x'} - |\vp(x, t) - c|,
\\
F_2(x,t)
& = \frac{1}{2\pi} \int_{\R} \vp_{x'}(x', t)\log\bigg[\frac{(x - x')^2 + (\vp(x, t) - \vp(x', t))^2}{(x - x')^2}\bigg] \diff{x'}
+ \hilbert[\vp - c](x,t).
\end{align*}
Using these expressions in \eqref{temp_vp}, simplifying the result,  and substituting $x' = x + \zeta$, we get \eqref{dim_cde} as before.

\section{A lemma}

In this appendix, we show that the removal of the linear evolution $e^{t\hilbert}$ from \eqref{weq} leads to \eqref{cub}.
\begin{lemma}
\label{lem:vwtrans}
Suppose that $v(x,\tau)$ and $w(x,t)$ are related by \eqref{defw}. Then $v(x,\tau)$ satisfies \eqref{cub} if and only if
$w(x,t)$ satisfies \eqref{weq}.
\end{lemma}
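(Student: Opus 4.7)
The plan is direct substitution. Writing $w(x,t;\ve) = \ve e^{t\hilbert}v(x,\ve^2 t)$ and using that $e^{t\hilbert}$ commutes with $\hilbert$ together with the chain rule,
\[
w_t = \ve\hilbert e^{t\hilbert}v + \ve^3 e^{t\hilbert}v_\tau = \hilbert w + \ve^3 e^{t\hilbert}v_\tau.
\]
Introducing the cubic operator $\mathcal{N}[u] := \px\{u^2|\px|u - u|\px|u^2 + \tfrac{1}{3}|\px|u^3\}$ and noting that $\mathcal{N}$ is homogeneous of degree three, so $\mathcal{N}[w] = \ve^3\mathcal{N}[e^{t\hilbert}v]$, equation \eqref{weq} becomes $\ve^3 e^{t\hilbert}v_\tau + \tfrac{m^2+1}{2}\ve^3\mathcal{N}[e^{t\hilbert}v] = 0$. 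Applying the unitary operator $e^{-t\hilbert}$ and comparing with \eqref{cub}, the lemma reduces to the commutation identity
\[
\mathcal{N}[e^{t\hilbert}v] = e^{t\hilbert}\mathcal{N}[v]\qquad\text{for all real } v \text{ and all } t \in \R. \qquad (\ast)
\]

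I would prove $(\ast)$ via its infinitesimal version $D\mathcal{N}[v]\cdot\hilbert v = \hilbert\mathcal{N}[v]$: the function $F(t) := e^{-t\hilbert}\mathcal{N}[e^{t\hilbert}v]$ satisfies $F(0) = \mathcal{N}[v]$ and $F'(t) = e^{-t\hilbert}\bigl(D\mathcal{N}[e^{t\hilbert}v]\cdot\hilbert e^{t\hilbert}v - \hilbert\mathcal{N}[e^{t\hilbert}v]\bigr)$, so the infinitesimal identity forces $F \equiv \mathcal{N}[v]$. A more concrete route is to write $v = \Psi + \Psi^*$ with $\P\Psi = \Psi$, so that $e^{t\hilbert}v = e^{-it}\Psi + e^{it}\Psi^*$ and $\mathcal{N}[e^{t\hilbert}v]$ becomes a $t$-trigonometric polynomial with possible frequencies $\pm 1, \pm 3$. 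The content of $(\ast)$ is then that (i) the $e^{\pm 3it}$ coefficients vanish and (ii) the $e^{-it}$ coefficient is purely positive-frequency in $x$ (its $e^{it}$ conjugate then being purely negative-frequency), matching the structure $e^{t\hilbert}\mathcal{N}[v] = e^{-it}\P\mathcal{N}[v] + e^{it}\QQ\mathcal{N}[v]$.

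The non-resonant cancellation (i) is immediate: using $|\px|\Psi = -i\Psi_x$, the $e^{-3it}$ coefficient of $v^2|\px|v - v|\px|v^2 + \tfrac{1}{3}|\px|v^3$ works out to
\[
(-i\Psi^2\Psi_x) - (-2i\Psi^2\Psi_x) + \tfrac{1}{3}(-3i\Psi^2\Psi_x) = 0,
\]
which is precisely why the numerical coefficients $1, -1, \tfrac{1}{3}$ appear in $\mathcal{N}$. The main obstacle is (ii): one must show that the $e^{-it}$ coefficient of the cubic nonlinearity, which a direct expansion gives as $\px\{i\Psi^2\Psi^*_x - 2\Psi|\px||\Psi|^2 + |\px|(|\Psi|^2\Psi)\}$, lies in the range of $\P$. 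This reduces to rearranging products of Hilbert transforms using $|\px| = \hilbert\px = \px\hilbert$, $\hilbert|\px| = -\px$, $\hilbert^2 = -I$, and the Cotlar identity $\hilbert(ab - \hilbert a\cdot\hilbert b) = a\hilbert b + b\hilbert a$; it is essentially the same algebraic manipulation that produces the solvability condition \eqref{psieqn} in Section~\ref{sec:bhres}. Once $(\ast)$ is established, the equivalence asserted by the lemma follows by reversing the chain of implications above, since $e^{\pm t\hilbert}$ is unitary and $\mathcal{N}$ is degree-three homogeneous.
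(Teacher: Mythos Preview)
Your reduction to the commutation identity $(\ast)$ is exactly what the paper does. The difference lies in how $(\ast)$ is established. The paper writes the cubic as $\mathcal{N}[v]=\frac{1}{3}\px M(v,v,v)$ for the symmetric trilinear form with Fourier symbol
\[
m(k,\xi,\eta)=|k|+|\xi|+|\eta|-|k+\xi|-|\xi+\eta|-|k+\eta|+|k+\xi+\eta|,
\]
and then observes the single algebraic fact that $\sgn k+\sgn\xi+\sgn\eta=\sgn(k+\xi+\eta)$ whenever $m(k,\xi,\eta)\neq 0$. Since $e^{t\hilbert}$ multiplies the mode $e^{i\xi x}$ by $e^{-it\sgn\xi}$, this immediately gives $e^{-t\hilbert}M(e^{t\hilbert}v,e^{t\hilbert}v,e^{t\hilbert}v)=M(v,v,v)$, yielding both your (i) and (ii) in one stroke.

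Your physical-space route via the decomposition $v=\Psi+\Psi^*$ is correct and your verification of (i) is fine; claim (ii) is also true (indeed it is equivalent, on the support of $m$, to the sign identity above). However, your appeal to the solvability computation in Appendix~\ref{sec:simpsolv} is slightly off target: that appendix simplifies the $\P$-projection of the $e^{-it}$ coefficient, whereas (ii) asserts that its $\QQ$-projection vanishes. The latter can be checked with the same Hilbert-transform toolkit (using $(\hilbert-i)=-2i\P$ to see that $\QQ\big[\Psi\cdot(\hilbert-i)(|\Psi|^2)_x\big]=0$, etc.), but it is a separate calculation. The paper's symbol argument bypasses all of this bookkeeping.
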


\begin{proof}
We write \eqref{weq} as
\[
w_t + \frac{m^2+1}{6} \px M(w,w,w) = \hilbert[w], \quad M(w,w,w) = 3w^2 |\px| w - 3w |\px| w^2 + |\px| w^3,
\]
where $M$ is a symmetric trilinear operator with
\begin{align*}
M(e^{ikx}, e^{i\xi x}, e^{i\eta x}) &= m(k,\xi,\eta) e^{i(k+\xi+\eta)x},
\\
m(k,\xi,\eta) &= |k| + |\xi| + |\eta| - |k+\xi| - |\xi+\eta| - |k+\eta| + |k+\xi+\eta|.
\end{align*}
Using \eqref{defw} in \eqref{weq}, we find that
\[
v_\tau + \frac{m^2+1}{6} e^{-t\hilbert} \px M(e^{t\hilbert}v, e^{t\hilbert}v, e^{t\hilbert}v) = 0,
\]
so we just have to show that $M(e^{t\hilbert}v, e^{t\hilbert}v, e^{t\hilbert}v) = e^{t\hilbert} M(v,v,v)$, but this follows from the
fact that
\[
\sgn k + \sgn \xi + \sgn \eta = \sgn(k+\xi+\eta)
\]
whenever $m(k,\xi,\eta) \ne 0$.
\end{proof}

\section{Simplification of a solvability condition}
\label{sec:simpsolv}

In this appendix, we provide the details of reducing the solvability condition for \eqref{ve3'21}, given by
\begin{align*}
\P\bigg[\Psi_\tau + \rho \left(\Psi \Psi_{10} + \Psi^* \Psi_{12}\right)_x + \frac{\sigma}{2} \px \bigg\{2 |\Psi|^2 |\px| \Psi + \Psi^2 |\px| \Psi^* - 2 \Psi |\px| |\Psi|^2 - \Psi^* |\px| \Psi^2 + |\px| (\Psi |\Psi|^2)\bigg\}\bigg] = 0,
\end{align*}
to \eqref{psieqn}.

Using $\P[\Psi] = \Psi$ and \eqref{ve2a}, we see that it suffices to show
\begin{align}
\label{solv1}
\P\bigg[\Psi \hilbert[|\Psi|^2]_x + \frac{i}{2} \Psi^* (\Psi^2)_x\bigg] = \P\left[i |\Psi|^2 \Psi_x + \Psi \hilbert[|\Psi|^2]_x\right],
\end{align}
and
\begin{align}
\label{solv2}
- \frac{1}{2} \P\bigg[2 |\Psi|^2 |\px| \Psi + \Psi^2 |\px| \Psi^* - 2 \Psi |\px| |\Psi|^2 - \Psi^* |\px| \Psi^2 + |\px| (\Psi |\Psi|^2)\bigg] = \P\left[i |\Psi|^2 \Psi_x + \Psi \hilbert[|\Psi|^2]_x\right].
\end{align}
Equation \eqref{solv1} is immediate, since $\frac{i}{2} \Psi^* (\Psi^2)_x = i |\Psi|^2 \Psi_x$.
To prove \eqref{solv2}, we  use the identities
\[
|\px| \Psi = -i \Psi_x,\qquad |\px| \Psi^* = i \Psi^*_x,\qquad |\px| =  \hilbert\px ,\qquad \P|\px| = -i\px
\]
and obtain that
\begin{align*}
- \P\left[|\Psi|^2 |\px| \Psi\right] & =  \P\left[i |\Psi|^2 \Psi_x\right],\\
- \frac{1}{2} \P\left[\Psi^2 |\px| \Psi^*\right] & = -\frac{1}{2} \P\left[i \Psi^2 \Psi^*_x\right],\\
\P\left[\Psi |\px| |\Psi|^2\right] & = \P\left[\Psi \hilbert[|\Psi|^2]_x\right],\\
\frac{1}{2} \P\left[\Psi^* |\px| \Psi^2\right] &  = - \P\left[i |\Psi|^2 \Psi_x\right],\\
- \frac{1}{2} \P\left[|\px| (\Psi |\Psi|^2)\right] & =  \P\left[i |\Psi|^2 \Psi_x\right] + \frac{1}{2} \P\left[i \Psi^2 \Psi^*_x\right].
\end{align*}
Summing these terms gives \eqref{solv2}.

\end{document}